\renewcommand{\epsilon}{\varepsilon}
\numberwithin{equation}{section}
\newtheoremstyle{thmlemcorr}{10pt}{10pt}{\itshape}{}{\bfseries}{.}{10pt}{{\thmname{#1}\thmnumber{ #2}\thmnote{ (#3)}}}
\newtheoremstyle{thmlemcorr*}{10pt}{10pt}{\itshape}{}{\bfseries}{.}\newline{{\thmname{#1}\thmnumber{ #2}\thmnote{ (#3)}}}
\newtheoremstyle{defi}{10pt}{10pt}{\itshape}{}{\bfseries}{.}{10pt}{{\thmname{#1}\thmnumber{ #2}\thmnote{ (#3)}}}
\newtheoremstyle{remexample}{10pt}{10pt}{}{}{\bfseries}{.}{10pt}{{\thmname{#1}\thmnumber{ #2}\thmnote{ (#3)}}}
\newtheoremstyle{ass}{10pt}{10pt}{}{}{\bfseries}{.}{10pt}{{\thmname{#1}\thmnumber{ A#2}\thmnote{ (#3)}}}
\theoremstyle{thmlemcorr}
\newtheorem{theorem}{Theorem}
\numberwithin{theorem}{section}
\newtheorem{lemma}[theorem]{Lemma}
\newtheorem{corollary}[theorem]{Corollary}
\newtheorem{proposition}[theorem]{Proposition}
\theoremstyle{thmlemcorr*}
\newtheorem{theorem*}{Theorem}
\newtheorem{lemma*}[theorem]{Lemma}
\newtheorem{corollary*}[theorem]{Corollary}
\newtheorem{proposition*}[theorem]{Proposition}
\newtheorem{problem*}[theorem]{Problem}
\newtheorem{conjecture*}[theorem]{Conjecture}
\theoremstyle{defi}
\newtheorem{definition}[theorem]{Definition}
\theoremstyle{remexample}
\newenvironment{remark}
  {\pushQED{\qed}\remarkx}
  {\popQED\endremarkx}
\newenvironment{example}
  {\pushQED{\qed}\examplex}
  {\popQED\endexamplex}
\theoremstyle{ass}
\newcommand{\Fcal}{\mathcal{F}}
\newcommand{\Hcal}{\mathcal{H}}
\newcommand{\Mcal}{\mathcal{M}}
\newcommand{\Sbb}{\mathbb{S}}
\DeclareMathOperator{\supp}{supp}
\newcommand{\N}{\mathbb{N}}
\newcommand{\R}{\mathbb{R}}
\def\XXint#1#2#3{{\setbox0=\hbox{$#1{#2#3}{\int}$}
\vcenter{\hbox{$#2#3$}}\kern-.5\wd0}}
\DeclarePairedDelimiter\abs{\lvert}{\rvert}
\DeclarePairedDelimiter{\norm}{\lVert}{\rVert}
\DeclarePairedDelimiter{\inner}{\langle}{\rangle}
\newcommand{\Rn}{\R^{n}}
\renewcommand{\phi}{\varphi}
\newcommand{\dx}{\, dx}
\newcommand{\dy}{\, dy}
\newcommand{\F}{\mathcal{F}}
\newcommand{\Cc}{C_{c}^{\infty}}
\newcommand{\Ccr}{C_{c}^{\infty}(\R^n)}
\newcommand{\starto}{\stackrel{*}{\rightharpoonup}}
\newcommand{\na}{\nabla^{\alpha}}
\newcommand{\da}[1]{(-\Delta)^{#1}}
\newcommand{\nanl}{\nabla^{\alpha}_{\normalfont{\text{NL}}}}
\newcommand{\diva}{\mathrm{div}^{\alpha}}
\def\XXint#1#2#3{{\setbox0=\hbox{$#1{#2#3}{\int}$}
     \vcenter{\hbox{$#2#3$}}\kern-.5\wd0}}
\newcommand{\dac}{(-\Delta)^{\frac{1-\alpha}{2}}}
\newcommand{\Rmn}{\mathbb{R}^{m \times n}}
\newcommand{\Spgm}{S^{\alpha,p}_{g}(\Omega;\R^m)}
\newcommand{\Frel}{\mathcal{F}^{\mathrm{rel}}_{\alpha}}
\newcommand{\Lipb}{\mathrm{Lip}_{b}}
\newcommand{\Lip}{\mathrm{Lip}}
\newcommand{\Da}{D^{\alpha}}
\newcommand{\bva}{BV^{\alpha}}
\newcommand{\bvag}{BV^{\alpha}_g}
\newcommand{\bvagom}{BV^{\alpha}_g(\Omega;\R^m)}
\newcommand{\Sone}{S^{\alpha,1}}
\newcommand{\Sonegom}{S^{\alpha,1}_g(\Omega;\R^m)}
\newcommand{\Ureplace}{U}
\g@addto@macro\bfseries{\boldmath}
\title[Extending Fractional Linear Growth Functionals]{Extending Linear Growth Functionals to Functions of Bounded Fractional Variation}
\author{Hidde Sch\"{o}nberger}
\address{Mathematisch-Geographische Fakult\"at, Katholische Universit\"at Eichst\"att-Ingolstadt, Ostenstra{\ss}e 28, 85072 Eichst\"att}
\email{hidde.schoenberger@ku.de}
\begin{document}


\begin{abstract}
In this paper we consider the minimization of a novel class of fractional linear growth functionals involving the Riesz fractional gradient. These functionals lack the coercivity properties in the fractional Sobolev spaces needed to apply the direct method. We therefore utilize the recently introduced spaces of bounded fractional variation and study the extension of the linear growth functional to these spaces through relaxation with respect to the weak* convergence. Our main result establishes an explicit representation for this relaxation, which includes an integral term accounting for the singular part of the fractional variation and features the quasiconvex envelope of the integrand. The role of quasiconvexity in this fractional framework is explained by a technique to switch between the fractional and classical settings. We complement the relaxation result with an existence theory for minimizers of the extended functional.

\vspace{8pt}

\noindent\textsc{MSC (2020): 49J45, 35R11, 26B30} %
\vspace{8pt}

\noindent\textsc{Keywords:} linear growth functionals, Riesz fractional gradient, spaces of bounded variation, relaxation, quasiconvexity
\vspace{8pt}

\noindent\textsc{Date:} \today.
\end{abstract}
\maketitle
\thispagestyle{empty}
%

\section{Introduction}
Motivated from both the practical and theoretical point of view, the study of nonlocal aspects in the calculus of variations has received widespread attention in the literature recently. From applications in peridynamics~\cite{Sil00, MeD15}, imaging processing \cite{OsherGilboa, AubertKornprobst, Antiletal22} and machine learning~\cite{AntilKhatri, HollerKunish}, to the abstract study of lower semicontinuity~\cite{Pedregalperidynamics,Bellidoperidynamics,KRZ22, KrS22} and localization~\cite{Bellido2, BMCP15, AAB20} of various nonlocal functionals. Especially the introduction of the so-called Riesz fractional gradient by Shieh \& Spector \cite{Shieh1,Shieh2}, which for $\phi \in C_c^{\infty}(\R^n)$ and $\alpha \in (0,1)$ is defined as
\[
\na \phi(x) =\mu_{n,\alpha}\int_{\R^n}\frac{\phi(y)-\phi(x)}{\abs{y-x}^{n+\alpha}}\frac{y-x}{\abs{y-x}}\,dy \quad \text{for $x \in \R^n$},
\]
has seen a dramatic rise in interest and has opened up the possibility to study new types of fractional problems. We refer to just a few of the recent works \cite{MengeshaUnified,Unified,BCMC22, KrS22, Bellido}. The Riesz fractional gradient provides an alternative to the more well-known fractional Laplacian and shares many similarities with the classical gradient. In fact, it is the unique translationally and rotationally invariant $\alpha$-homogeneous operator \cite{Silhavy}, which makes it a canonical choice for a fractional gradient.

The definition of the fractional gradient can be extended in a distributional way to define the naturally associated fractional Sobolev spaces
\[
S^{\alpha,p}(\R^n;\R^m)=\{ u \in L^p(\R^n;\R^m)\,:\, \na u \in L^p(\R^n;\Rmn)\},
\]
with $\alpha \in (0,1)$ and $p \in [1,\infty]$, see \cite{KrS22, comi1, comi2, comi3} for more details. With these new spaces came an inherent class of variational problems to study, that is, integral functionals depending on the Riesz fractional gradient. Precisely, with $\Omega \subset \R^n$ open and bounded, $p \in (1,\infty)$ and $g \in S^{\alpha,p}(\R^n;\R^m)$, one defines the functions subjected to a typical complementary-value condition
\[
S^{\alpha,p}_g(\Omega;\R^m)=\{u \in S^{\alpha,p}(\R^n;\R^m)\,:\, u=g \ \text{a.e.~in $\Omega^c$}\},
\]
and aims to minimize the functional
\begin{equation}\label{eq:pfunc}
\Spgm \ni u \mapsto \int_{\R^n} f(x,\na u(x))\,dx;
\end{equation}
here $f:\R^n\times \Rmn \to \R$ is a Carath\'{e}odory integrand with suitable $p$-growth and coercivity bounds. 

The weak lower semicontinuity and existence of minimizers of these functionals was initially shown in the scalar setting in \cite{Shieh1, Shieh2} under the condition of convexity in the second argument of $f$ and later extended to the vectorial case under polyconvexity in \cite{Bellido}. More recently, in \cite{KrS22} the weak lower semicontinuity of the functional $\Fcal_{\alpha}$ was fully characterized in terms of the notion $\alpha$-quasiconvexity, which is a condition on a function $h: \Rmn \to \R$ that requires that
\[
h(A) \leq \int_{(0,1)^n}h(A+ \na \phi(y))\,dy \quad \text{for all $A \in \Rmn$ and $\phi \in C^{\infty}_{\rm per}((0,1)^n;\R^m)$},
\]
see \cite[Definition~4.6]{KrS22}. The proof of this result relied on a method to translate fractional gradients into classical gradients and back by using the identities
\begin{equation}\label{eq:relation}
\na \phi = \nabla I_{1-\alpha} \phi \quad \text{and} \quad \nabla \phi = \na \dac \phi \quad \text{for $\phi \in C_c^{\infty}(\R^n)$},
\end{equation}
and actually revealed that the notion of $\alpha$-quasiconvexity is independent of $\alpha \in (0,1)$ and equivalent to Morrey's well-known quasiconvexity \cite{Mor}. Therefore, the weak lower semicontinuity of the functionals $\Fcal_{\alpha}$ can be characterized in the same way as the classical integral functionals in the calculus of variations. \medskip

Inspired by the rich history on classical linear growth problems, cf.~\cite{GoS64,Res68,Dal80,AmD92,FoM93,KrR10a,RiS19}, we build upon the above results and exploit the distributional character of the fractional Sobolev spaces to consider the first class of fractional linear growth functionals in the literature. This class constitutes the natural extension of \eqref{eq:pfunc} to $p=1$, namely, functionals of the form
\begin{equation}\label{eq:functional1}
\Fcal_{\alpha}(u) = \int_{\R^n} f(x,\na u(x))\,dx \quad \text{for $u \in \Sonegom$},
\end{equation}
with $f:\R^n \times \Rmn \to \R$ a linear growth Carath\'{e}odory integrand and $g \in S^{\alpha,1}(\R^n;\R^m)$.

The immediate difficulty in the minimization of the above functional is the non-reflexivity of $\Sonegom$, which prevents the direct method from being used with respect to the weak convergence in $S^{\alpha,1}(\R^n;\R^m)$. Therefore, taking up a similar approach as in the classical case, one can suitably extend the functional $\Fcal_{\alpha}$ to a larger space of bounded fractional variation, in which compactness holds with respect to the weak* convergence. 

These spaces of bounded fractional variation and their properties have already been thoroughly studied by Comi \& Stefani and coauthors in \cite{comi1, comi2, comi3} and can be understood as
\[
BV^{\alpha}(\R^n;\R^m) =\{ u \in L^1(\R^n;\R^m)\,:\, \Da u \in \Mcal(\R^n; \Rmn)\},
\]
with $\Da u$ the so-called fractional variation measure of $u$ defined in a distributional sense. We also use the notation
\[
D^{\alpha}u= \na u \,dx + D^{\alpha}_su,
\]
where $\na u \in L^1(\R^n;\Rmn)$ is the absolutely continuous part of $\Da u$ with respect to the Lebesgue measure and $\Da_s u \in \Mcal(\R^n;\Rmn)$ is the singular part.
This new class of bounded variation spaces possesses interesting similarities and differences
with the classical $BV$-spaces and has sparked a lot of further investigations. Aspects such as the description of precise representatives \cite{Comiprecise}, Leibniz rules \cite{Comileibniz}, and the failure of a local chain rule \cite{Comichain} have been considered. Very recently, the fractional total variation has been used in the context of image processing providing a nonlocal alternative to the total variation regularization \cite{Antiletal22}. 

For the sake of finding an extension of $\Fcal_{\alpha}$, we introduce the complementary-value space
\[
\bvagom = \{ u \in BV^{\alpha}(\R^n;\R^m)\,:\, u=g \ \text{a.e.~in $\Omega^c$}\};
\]
bounded sequences in $\Sonegom$ will converge up to subsequence to an element of $\bvagom$ with respect to the weak* convergence (see Section~\ref{sec:bv}). Therefore, with an eye towards minimization, the natural extension of $\Fcal_{\alpha}$ to $\bvagom$ is the relaxation defined by
\begin{equation}\label{eq:reldef}
\Frel(u)=\inf\left\{ \liminf_{j \to \infty}\Fcal_{\alpha}(u_j)\,:\, (u_j)_j \subset \Sonegom, \ u_j \starto u \ \text{in $\bvagom$}\right\}
\end{equation}
for $u \in \bvagom$. The useful features of the functional $\Frel$ are that it admits a minimizer under suitable coercivity conditions and that minimizing sequences of $\Fcal_{\alpha}$ converge up to subsequence to minimizers of $\Frel$.

To benefit from these attributes, it is key to find an explicit representation of the relaxed functional. For this, one must, in particular, account for the concentration effects that fractional gradients of sequences in $\Sonegom$ can exhibit and how they relate to the singular part of the limiting fractional variation measure. The well-known concept of the (strong) recession function (cf.~\cite{KrR10a}), which describes the way an integrand $f$ behaves at infinity, is capable of this and is defined as
\begin{equation}\label{eq:recessionfrac}
f^{\infty}(x,A) = \lim_{\substack{(x',A') \to (x,A) \\ t \to \infty}}\frac{f(x',tA')}{t} \quad \text{for $x \in \R^n$ and $A \in \Rmn$},
\end{equation}
whenever it exists. We also recall the upper recession function $f^{\#}$, which is always well-defined, by replacing the limit in \eqref{eq:recessionfrac} with a limit superior. In addition, throughout the paper we use the following growth and coercivity bounds
\begin{equation}\label{eq:growth}\tag{G}
\abs{f(x,A)} \leq M\abs{A}+a(x) \quad \text{ for all $x \in \R^n$ and $A \in \Rmn$},
\end{equation}
with $M>0$ and $a \in L^1(\R^n)\cap L^{\infty}(\R^n)$ and
\begin{equation}\label{eq:coercivity}\tag{C}
\mu \abs{A} - c \leq f(x,A) \quad \text{ for all $x \in \R^n$ and $A \in \Rmn$},
\end{equation}
with $\mu,c >0$. Note that the growth bound ensures that $f$ has linear growth and $\Fcal_{\alpha}$ is well-defined and finite. We now state the following representation result for the relaxation of $\Fcal_{\alpha}$, which is the main result of the paper.

\begin{theorem}\label{th:main}
Let $\alpha \in (0,1)$, $\Omega \subset \R^n$ be a bounded Lipschitz domain and $g \in \Sone(\R^n;\R^m)$. Assume $f:\R^n \times \Rmn \to \R$ is a Carath\'{e}odory integrand that satisfies \eqref{eq:growth} and \eqref{eq:coercivity}, and that 
\begin{equation}\label{eq:technicalcond}
\text{$f^{\infty}(x,A)$ exists and $(f^{\rm qc})^{\#}(x,A)=\limsup_{\substack{A'\to A \\ t \to \infty}} \frac{f^{\rm qc}(x,tA')}{t}$ for all $(x,A) \in \overline{\Omega} \times \Rmn,$}
\end{equation}
with $f^{\rm qc}$ the quasiconvex envelope of $f$ with respect to its second argument. Then, the relaxation of $\F_{\alpha}$ in \eqref{eq:functional1} given by \eqref{eq:reldef} can be represented as
\begin{equation}\label{eq:relaxation}
\begin{split}
\F_{\alpha}^{\rm rel}(u) =\int_{\Omega} f^{\rm qc}(x,\na u)\dx+\int_{\overline{\Omega}}(f^{\rm qc})^{\#}\left(x,\frac{d \Da_s u}{d\abs{\Da_s u}}\right)\,d\abs{\Da_s u}+\int_{\Omega^c}f(x,\na u)\dx,
\end{split}
\end{equation}
for $u \in \bvagom$. 
\end{theorem}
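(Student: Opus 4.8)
The plan is to prove the representation by establishing the two matching inequalities $\Fcal_\alpha^{\rm rel}(u) \ge R(u)$ and $\Fcal_\alpha^{\rm rel}(u) \le R(u)$ for every $u \in \bvagom$, where $R(u)$ denotes the right-hand side of \eqref{eq:relaxation}. The guiding device is the fractional-to-classical transfer from \eqref{eq:relation}: for $u \in BV^\alpha(\R^n;\R^m)$ the Riesz potential $v := I_{1-\alpha}u$ lies in the classical space $BV(\R^n;\R^m)$ with $Dv = \Da u$, so $\nabla v = \na u$, $D_s v = \Da_s u$, and $\Fcal_\alpha(u) = \int_{\R^n} f(x,\nabla v)\dx$ becomes an ordinary linear-growth functional of $v$. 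Since $\alpha$-quasiconvexity coincides with Morrey's quasiconvexity \cite{KrS22}, this is exactly why the relaxed integrands are the quasiconvex envelope $f^{\rm qc}$ and its upper recession $(f^{\rm qc})^{\#}$, as in the classical representation of \cite{AmD92,FoM93,KrR10a}. The essential difficulty is that $I_{1-\alpha}$ is nonlocal, so the complementary-value condition $u=g$ in $\Omega^c$ is \emph{not} a local constraint on $v$; the resulting coupling between $\Omega$ and $\Omega^c$ must be handled directly and is responsible for the asymmetry that $f^{\rm qc}$ appears on $\Omega$ while $f$ survives on $\Omega^c$.

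For the lower bound I would fix $u_j \starto u$ in $\bvagom$ with $\sup_j \Fcal_\alpha(u_j)<\infty$ and split $\Fcal_\alpha(u_j)$ over $\Omega$ and $\Omega^c$. A structural observation governs the exterior: applied to $u-g$, the nonlocal formula shows that off $\overline{\Omega}$ the fractional gradient $\na u$ differs from $\na g$ only by a smooth density, so $\Da_s u$ is concentrated on $\overline{\Omega}$; and since every competitor satisfies $u_j=u=g$ in $\Omega^c$, one has $\na u_j(x)-\na u(x) = \mu_{n,\alpha}\int_\Omega \frac{u_j(y)-u(y)}{\absb{y-x}^{n+\alpha}}\frac{y-x}{\absb{y-x}}\dy$ for $x\in\Omega^c$, which tends to zero in $L^1(\{\dist(\cdot,\Omega)\ge\delta\})$ because the kernel is bounded there and $u_j\to u$ in $L^1$. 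Consequently $\int_{\Omega^c} f(x,\na u_j)\dx \to \int_{\Omega^c} f(x,\na u)\dx$, so $f$ itself---not $f^{\rm qc}$---is recovered on the exterior. On $\Omega$ I pass to $v_j = I_{1-\alpha}u_j \starto v$ in classical $BV$ and combine $f\ge f^{\rm qc}$ with the lower semicontinuity of the quasiconvex linear-growth functional \cite{AmD92,FoM93} to obtain $\liminf_j \int_\Omega f(x,\na u_j)\dx \ge \int_\Omega f^{\rm qc}(x,\na u)\dx + \int_{\overline{\Omega}} (f^{\rm qc})^{\#}\bigl(x,\tfrac{d\Da_s u}{d\abs{\Da_s u}}\bigr)\,d\abs{\Da_s u}$, the singular term living on $\overline{\Omega}$ and absorbing the concentrations that may form on $\partial\Omega$. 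The hypothesis \eqref{eq:technicalcond}, granting existence of $f^{\infty}$ and the stated form of $(f^{\rm qc})^{\#}$, is precisely what makes both recession contributions well defined and mutually consistent.

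For the upper bound I would construct recovery sequences directly in $\Sonegom$ by adding perturbations supported in $\Omega$, so that the constraint $u_j=g$ in $\Omega^c$ holds automatically. For the absolutely continuous term, $\alpha$-quasiconvexity (hence quasiconvexity) lets me approach $f^{\rm qc}(x,\na u)$ from above by rapidly oscillating fractional-gradient fields, transported from classical periodic constructions through $\nabla\phi = \na\dac\phi$; because these oscillations vanish in $L^1(\Omega)$, the leakage of their fractional gradients into the exterior vanishes in $L^1$ there and leaves the $\Omega^c$-term asymptotically equal to $\int_{\Omega^c} f(x,\na u)\dx$. For the singular term, I concentrate perturbations along $\abs{\Da_s u}$ in the polar direction $\tfrac{d\Da_s u}{d\abs{\Da_s u}}$, so that the very definition of $(f^{\rm qc})^{\#}$ reproduces the singular integrand. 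A density reduction to smoother $u$ together with a diagonal extraction then assembles a single sequence $u_j\starto u$ attaining $R(u)$.

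The step I expect to be the main obstacle is the nonlocal bookkeeping that pervades both inequalities: in contrast to the classical case, a modification of $u$ localized in $\Omega$ perturbs $\na u$ throughout $\R^n$, so the oscillations and concentrations inserted in $\Omega$ must simultaneously (i) realize $f^{\rm qc}$ and $(f^{\rm qc})^{\#}$ there, (ii) leave the exterior term undisturbed, and (iii) keep the absolutely continuous and singular contributions on $\overline{\Omega}$ cleanly separated. Controlling this leakage---quantified through the smoothing of the Riesz kernel away from the support of the perturbation---is exactly where the argument departs from classical linear-growth relaxation, and where the fractional-classical transfer does the decisive work.
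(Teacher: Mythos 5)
Your high-level strategy coincides with the paper's (transfer to classical $BV$ via $v=I_{1-\alpha}u$, quasiconvexity-driven lower semicontinuity inside, strong convergence of $\na u_j$ away from $\Omega$ outside, recovery sequences built by pulling classical oscillations back through $\dac$), but both halves have a genuine gap. For the lower bound, the clean split into the two separate estimates $\int_{\Omega^c}f(x,\na u_j)\dx\to\int_{\Omega^c}f(x,\na u)\dx$ and $\liminf_j\int_{\Omega}f(x,\na u_j)\dx\geq\int_{\Omega}f^{\rm qc}(x,\na u)\dx+\int_{\overline{\Omega}}(f^{\rm qc})^{\#}\,d\abs{\Da_s u}$ is false as stated. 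Lemma~\ref{le:strongoutside} gives strong convergence only on $(\Omega')^c$ for $\Omega'\Supset\Omega$; on the collar $\Omega'\setminus\overline{\Omega}$ the gradients $\na u_j$ may concentrate mass onto $\partial\Omega$ \emph{from outside} (this is exactly what Remark~\ref{rem:areastrict} warns is not excluded), so the exterior integral need not converge, and the portion of $\Da_s u$ on $\partial\Omega$ generated from outside is invisible to $\liminf_j\int_\Omega$, so the interior estimate can fail. Only the \emph{sum} of the two inequalities is true, and establishing that requires the bookkeeping the paper performs in Theorem~\ref{th:sufficiency}: generate the Young measure on an enlarged $\Omega'$, extend $f$ by its recession function on $\Omega'\setminus\overline{\Omega}$, estimate the discrepancy on the collar, and let $\Omega'\downarrow\Omega$. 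You also leave the role of the second half of \eqref{eq:technicalcond} vague; it is needed specifically because the generalized Jensen inequalities for upper recession functions apply directly only to $x$-independent integrands.

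For the upper bound, the treatment of the singular part is the real missing idea. Constructing perturbations in $\Sonegom$ whose (absolutely continuous) fractional gradients concentrate precisely on $\abs{\Da_s u}$ with the prescribed polar is the hard blow-up route of the classical theory, and you give no indication of how to carry it out in the fractional setting, where a perturbation supported near the singular set pollutes $\na u$ everywhere. The paper avoids this construction entirely: it first proves that $g+C_c^{\infty}(\Omega;\R^m)$ is dense in $\bvagom$ with respect to \emph{area-strict} convergence (Theorem~\ref{th:density}) and that the candidate relaxed functional is upper semicontinuous along such sequences (via Theorem~\ref{th:fundyoung}, \eqref{eq:lowerfundyoung} and a truncation of the integrand), which reduces the whole upper bound to the case $\Da_s u=0$; only the oscillation part then needs a recovery sequence, obtained from the classical relaxation theorem on $O\Subset\Omega$ transported back by $\dac$ and a cut-off, followed by $O\uparrow\Omega$ and a diagonal argument. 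Your closing ``density reduction to smoother $u$'' gestures at this, but you identify neither the correct topology nor why the functional is continuous along it --- and if that reduction were in place, your concentration construction would be superfluous. As written, the upper bound for general $u\in\bvagom$ with $\Da_s u\neq 0$ is not established.
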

This theorem provides a fractional analogue to the relaxation result in the classical $BV$-setting \cite{KrR10b, ADR} and an extension of the $p$-growth fractional relaxation in \cite[Theorem~1.2]{KrS22}. The reason that the quasiconvex envelope arises in the relaxation is related to the fact that quasiconvexity is the correct characterizing notion for lower semicontinuity similarly as in the $p$-growth case from \cite{KrS22}. However, the integrand remains unchanged for $x \in \Omega^c$, since fractional gradients of weak*  convergent sequences in $\bvagom$ converge strongly in sets with a positive distance from $\Omega$ (Lemma~\ref{le:strongoutside}). Furthermore, the second integral relating to the singular part of the fractional variation is only integrated over $\overline{\Omega}$, because for $u \in \bvagom$ the measure $D^{\alpha}_s u$ is supported on $\overline{\Omega}$. This follows since the singular part of the fractional variation actually behaves locally, cf.~Remark~\ref{rem:locality}, implying that $D^{\alpha}_s u =D^{\alpha}_s g =0$ outside of $\overline{\Omega}$.
A sufficient condition for \eqref{eq:technicalcond} only in terms of $f$ is given in Remark~\ref{rem:relaxation}\,c). \medskip
 
The proof of the lower bound of the relaxation result hinges on a characterization of the weak* lower semicontinuity of functionals of the form
\begin{equation}\label{eq:extendedfunctional}
\overline{\Fcal}_{\alpha}(u) = \int_{\R^n} f(x,\na u)\,dx + \int_{\overline{\Omega}} f^{\infty} \left(x,\frac{d \Da_s u}{d\abs{\Da_s u}}\right)\,d\abs{\Da_s u} \quad \text{for $u \in \bvagom$},
\end{equation}
see Theorem~\ref{th:sufficiency}. It states that the lower semicontinuity is equivalent to $f(x,\cdot)$ being quasiconvex for a.e.~$x \in \Omega$ and is proven by using an analogue of the identities in \eqref{eq:relation} for functions of bounded variation as established in \cite{comi2}. In addition, we make substantial use of the theory of generalized Young measures developed in \cite{DiM87, AlB97, KrR10a} for linear growth problems, which allow one to capture the oscillation and concentration effects of sequences of measures. A technical issue arises from the fact that we only assume that $f^{\infty}$ exists for $x \in \overline{\Omega}$, requiring some care to account for the possible mass that comes from outside $\Omega$ and concentrates on the boundary $\partial \Omega$. 

The construction of a recovery sequence for the upper bound is carried out in two steps. We first find for $u \in \bvagom$ a sequence $(u_j)_j \subset \Sonegom$ that converges to $u$ in a strong enough sense so that the values of the functional along the sequence converge. The natural notion, which has been utilized in the classical case \cite{KrR10b}, is that of area-strict convergence (Definition~\ref{def:areastrict}). To exploit the properties of area-strict convergence, we prove that $\Sonegom$ (and even $g+C_c^{\infty}(\Omega;\R^m)$) is dense in the larger space $\bvagom$ with respect to this convergence, see Theorem~\ref{th:density}. The second step can then restrict to smooth functions to recover the quasiconvexification of the integrand and relies on adaptations of the argument in \cite[Theorem~1.2]{KrS22} and the identities in \eqref{eq:relation}. \medskip

Finally, we complement the relaxation and lower semicontinuity result with corresponding statements about the existence of minimizers under the coercivity condition \eqref{eq:coercivity}, see Corollary~\ref{cor:existence} and Remark~\ref{rem:relaxation}\,a). This actually requires an improved version of the fractional Poincar\'{e} inequality (Proposition~\ref{prop:poincare}) that only involves the fractional variation over a bounded domain. In particular, the area-integrand $f(A)=\sqrt{1+\abs{A}^2}-1$ in Example~\ref{ex:area} is an admissible candidate, providing a fractional analogue to the famous Plateau problem \cite{Giu84}.

An interesting open problem for further study is the relaxation of $\Fcal_{\alpha}$ when the integrand admits additional dependence on the values of $u$. Indeed, in the introduction of \cite{Comiprecise} it is mentioned for $u \in \bva(\R^n;\R^m)$ that $\Da u$ can be non-zero on sets of Hausdorff dimension $n-1$, just as the classical variation, while the precise representative of $u$ is only defined for $\Hcal^{n-\alpha+\epsilon}$-a.e.~$x \in \R^n$ for any $\epsilon >0$. This discrepancy between $n-1$ and $n-\alpha$ is not present in the classical case and makes it hard to deal with the singular part of the relaxation.

The structure of the text is as follows. In Section~\ref{sec:prelims} we present the notation and necessary preliminaries such as generalized Young measure theory and fractional calculus. Section~\ref{sec:bv} revolves around the spaces of bounded fractional variation and contains the proof of the density result with respect to area-strict convergence. The next section is devoted to the characterization of the weak* lower semicontinuity of extended functionals as in \eqref{eq:extendedfunctional}, and Section~\ref{sec:relaxation} rounds off the paper with the proof of Theorem~\ref{th:main}.

\section{Preliminaries}\label{sec:prelims}

\subsection{Notation}
The ball centered at $x \in \R^n$ with radius $r>0$ is denoted by $B_r(x)=\{y \in \R^n \,:\, \abs{x-y} < r\}$. The notation $E \Subset F$ for sets $E, F\subset \R^n$ means that $E$ is compactly contained in $F$, i.e.~$\overline{E} \subset F$ and $\overline{E}$ is compact. We denote by
\[
\mathbbm{1}_E(x) = \begin{cases} 1 &\text{for} \ x \in E,\\
0 &\text{otherwise},
\end{cases}\qquad x\in \R^n,
\]
the indicator function of a set $E \subset \R^n$.

By $\Lipb(\R^n)$ and $\Lip_{c}(\R^n)$, we refer to all the functions $\psi:\R^n \to \R$ that are Lipschitz continuous and bounded or Lipschitz continuous with compact support on $\R^n$, respectively; we write $\Lip(\psi)$ for the Lipschitz constant of $\psi$. Furthermore, for $X \subset \R^n$ open or closed we denote by $C_0(X)$ the Banach space obtained by taking the closure of the smooth compactly supported functions $C_c^{\infty}(X)$ with respect to the supremum norm. In particular, if $X$ is compact then $C_0(X)$ consists of all continuous functions from $X$ to $\R$. 

The space $\Mcal(X)$ consists of all finite Radon measures on $X$ and is the dual space of $C_0(X)$. As such, we say that $(\mu_j)_j \subset \Mcal(X)$ converges weak* to $\mu \in \Mcal(X)$ if $\int_X \phi \,d\mu_j \to \int_X \phi \,d\mu$ for all $\phi \in C_0(X)$. More generally, one can define for $f:X \to \R$ Borel measurable and $\mu \in \Mcal(X)$ the duality bracket $\inner{f,\mu}=\int_X f\,d\mu$. By $\Mcal^{+}(X)$ and $\Mcal^1(X)$ we denote the space of positive and probability measures, respectively. We utilize the usual notation for the Radon-Nikod\'{y}m derivative and for $\mu \in \Mcal(X)$ the Radon-Nikod\'{y}m derivative with respect to the Lebesgue measure is written as $\frac{d\mu}{dx}\in L^1(X)$, while $\mu^s \in \Mcal(X)$ represents the singular part of $\mu$ with respect to the Lebesgue measure. The measure $\abs{\mu} \in \Mcal^{+}(X)$ constitutes the total variation measure of $\mu \in \Mcal(X)$.

Finally, for $U \subset \R^n$ open we write $BV(U)$ for the space of functions of bounded variation and denote by $Du$ the total variation measure of a function $u \in BV(U)$. We use in this instance $\nabla u$ for the absolutely continuous part of $Du$ and $D_s u$ for the singular part of $Du$ with respect to the Lebesgue measure. The variant $BV_{\rm loc}(\R^n)$ consists of the function that lie in $BV(U)$ for all open and bounded $U \subset \R^n$. All of the mentioned spaces also possess vector-valued counterparts, which are denoted in the second argument like, for example, $BV(U;\R^m)$ and $\Mcal(X;\R^N)$ with $m,N \in \N$.

\subsection{Generalized Young Measures}
Generalized Young measures are a tool to study the asymptotic behavior of sequences of functions or even measures and are able to capture both the oscillation and concentration effects. Therefore, they are very well suited for studying linear growth problems in the calculus of variations. In this section we recall the basic definitions and properties that we need in the paper. We refer to \cite{KrR10a,Rindler} for more on this topic.

We begin with the definition of the (strong) recession function, which encodes the values of an integrand at infinity. For $\Ureplace \subset \R^n$ open and bounded and $f:\Ureplace \times \R^N \to \R$ it is defined as
\begin{equation*}
f^{\infty}(x,A) = \lim_{\substack{(x',A') \to (x,A) \\ t \to \infty}}\frac{f(x',tA')}{t} \quad \text{for $x \in \overline{\Ureplace}$ and $A \in \R^N$},
\end{equation*}
provided the limit exists. If the limit exists, then $f^{\infty}:\overline{\Ureplace}\times\R^N \to \R$ is automatically jointly continuous and positively homogeneous in the second argument. We now present the definition of a generalized Young measure, see~\cite{KrR10a} or \cite[Definition~2.3]{ADR}.
\begin{definition}
Let $\Ureplace \subset \R^n$ be open and bounded, then a triple $\nu=(\nu_x, \lambda_{\nu},\nu^{\infty}_x)$ is called a (generalized) Young measure on $\Ureplace$ with values in $\R^N$, we write $\nu \in Y(\Ureplace;\R^N)$, if:
\begin{itemize}
\item[$(i)$] $(\nu_x)_{x \in \Ureplace}\subset \Mcal^1(\R^N)$ is a parametrized family of probability measures on $\R^N$; \smallskip

\item[$(ii)$] $\lambda_{\nu} \in \Mcal_{+}(\overline{\Ureplace})$ is a positive measure on $\overline{\Ureplace}$; \smallskip

\item[$(iii)$] $(\nu_x^{\infty})_{x \in \overline{\Ureplace}} \subset \Mcal^1(\Sbb^{N-1})$ is a parametrized family of probability measures on $\Sbb^{N-1}$.
\end{itemize}
Additionally, it is required that $x\mapsto \inner{\abs{\cdot},\nu_x} \in L^1(\Ureplace)$ and the maps $x \mapsto \inner{f(x,\cdot),\nu_x}$ and $x \mapsto \inner{f^{\infty}(x,\cdot),\nu^{\infty}_x}$ are respectively Lebesgue measurable and $\lambda_{\nu}$-measurable for all Carath\'{e}odory integrands $f:\Ureplace\times \R^N \to \R$ for which $f^{\infty}$ exists.
\end{definition}
Intuitively, the Young measure is designed so that $(\nu_x)_{x \in \Ureplace}$ encodes the oscillations, while $\lambda^{\nu}$ determines the location and size of the concentrations, and $(\nu^{\infty}_x)_{x \in \overline{\Ureplace}}$ the direction of the concentrations. The main result about generalized Young measures is that bounded sequences of measures generate Young measures up to subsequence. Precisely, the following statement is a combination of \cite[Theorem~7 and Proposition~2]{KrR10a}.
\begin{theorem}\label{th:fundyoung}
Let $\Ureplace \subset \R^n$ be open and bounded and $(\mu_j)_j \subset \Mcal(\overline{\Ureplace};\R^N)$ a sequence such that $\sup_j \abs{\mu_j}(\overline{\Ureplace}) <\infty$. Then, there exists a subsequence (not relabeled) and a Young measure $\nu \in Y(\Ureplace;\R^N)$ with
\begin{align*}
\lim_{j \to \infty}\int_{\Ureplace} f\left(x,\frac{d\mu_j }{dx}\right)\,dx&+\int_{\overline{\Ureplace}} f^{\infty}\left(x,\frac{d\mu_j^s}{d\abs{\mu_j^s}}\right)\,d\abs{\mu_j^s} \\
&= \int_{\Ureplace} \inner{f(x,\cdot),\nu_x}\,dx + \int_{\overline{\Ureplace}} \inner{f^{\infty}(x,\cdot),\nu^{\infty}_x}\,d\lambda_{\nu},
\end{align*}
for all Carath\'{e}odory integrands $f:\Ureplace \times \R^N \to \R$ for which $f^{\infty}$ exists.
\end{theorem}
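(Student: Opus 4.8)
The plan is to reduce the statement to weak* compactness of positive measures on a compact metric space via the sphere compactification of $\R^N$. Write $\Bbb^N=\{\hat A\in\R^N:\abs{\hat A}<1\}$ and let $\pi:\R^N\to\Bbb^N$, $\pi(A)=A/(1+\abs{A})$, be the radial homeomorphism onto the open ball, with inverse $\pi^{-1}(\hat A)=\hat A/(1-\abs{\hat A})$. To each integrand $f:\overline{\Ureplace}\times\R^N\to\R$ that is continuous in $x$ and whose recession function $f^\infty$ exists, I associate the transform
\[
(Sf)(x,\hat A)=(1-\abs{\hat A})\,f\bigl(x,\pi^{-1}(\hat A)\bigr),\qquad (x,\hat A)\in\overline{\Ureplace}\times\Bbb^N,
\]
which extends continuously to $\overline{\Ureplace}\times\overline{\Bbb^N}$ with boundary values $(Sf)(x,\hat A)=f^\infty(x,\hat A)$ for $\hat A\in\Sbb^{N-1}$. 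One checks that $S$ maps the space $\Ebf(\Ureplace;\R^N)$ of such integrands isomorphically onto $C(\overline{\Ureplace}\times\overline{\Bbb^N})$, with inverse $g\mapsto(1+\abs{A})\,g(x,\pi(A))$. In particular $\Ebf(\Ureplace;\R^N)$ is separable and its dual is $\Mcal(\overline{\Ureplace}\times\overline{\Bbb^N})$, with positive measures corresponding to positive functionals.

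Next I would lift each $\mu_j$ to a positive measure on the compactified space. Writing $A_j=d\mu_j/dx$ and $P_j=d\mu_j^s/d\abs{\mu_j^s}$, the identity $(1-\abs{\pi(A)})^{-1}=1+\abs{A}$ gives $f(x,A)=(1+\abs{A})(Sf)(x,\pi(A))$, whence
\[
\int_{\Ureplace}f(x,A_j)\dx+\int_{\overline{\Ureplace}}f^\infty(x,P_j)\,d\abs{\mu_j^s}=\int_{\overline{\Ureplace}\times\overline{\Bbb^N}}Sf\,d\hat\mu_j,
\]
where $\hat\mu_j\in\Mcal^{+}(\overline{\Ureplace}\times\overline{\Bbb^N})$ is the sum of the pushforward of $(1+\abs{A_j})\,\mathcal{L}^n\restrict{\Ureplace}$ under $x\mapsto(x,\pi(A_j(x)))$ and the pushforward of $\abs{\mu_j^s}$ under $x\mapsto(x,P_j(x))$, the latter living on the boundary sphere since $\abs{P_j}=1$. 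Its total mass equals $\mathcal{L}^n(\Ureplace)+\abs{\mu_j}(\overline{\Ureplace})$, uniformly bounded by hypothesis, so by separability of $C(\overline{\Ureplace}\times\overline{\Bbb^N})$ and Banach--Alaoglu a subsequence satisfies $\hat\mu_j\weaklystar\hat\mu$ for some $\hat\mu\in\Mcal^{+}(\overline{\Ureplace}\times\overline{\Bbb^N})$, and the left-hand side converges to $\int Sf\,d\hat\mu$ for every $f\in\Ebf(\Ureplace;\R^N)$.

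The crux is to disintegrate $\hat\mu$ into the Young measure triple. Splitting the fibre $\overline{\Bbb^N}=\Bbb^N\sqcup\Sbb^{N-1}$ and undoing the transform yields
\[
\int Sf\,d\hat\mu=\int_{\overline{\Ureplace}\times\Bbb^N}(1-\abs{\hat A})f(x,\pi^{-1}\hat A)\,d\hat\mu+\int_{\overline{\Ureplace}\times\Sbb^{N-1}}f^\infty(x,\hat A)\,d\hat\mu.
\]
The key observation is that the weight $1-\abs{\hat A}$ is continuous and vanishes on the boundary, so $(1-\abs{\hat A})\hat\mu_j\weaklystar(1-\abs{\hat A})\hat\mu$; since the projection onto $\overline{\Ureplace}$ of $(1-\abs{\hat A})\hat\mu_j$ equals $\mathcal{L}^n\restrict{\Ureplace}$ for every $j$ (the ac part is reweighted to $\mathcal{L}^n\restrict{\Ureplace}$ and the singular part is annihilated by the weight), the same holds in the limit. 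Pushing the interior part forward by $\pi^{-1}$ therefore produces a finite measure on $\overline{\Ureplace}\times\R^N$ whose first marginal is exactly $\mathcal{L}^n\restrict{\Ureplace}$; disintegrating it gives probability measures $(\nu_x)_{x\in\Ureplace}\subset\Mcal^1(\R^N)$ and turns the first term into $\int_{\Ureplace}\inner{f(x,\cdot),\nu_x}\dx$. Setting $\lambda_\nu$ to be the projection onto $\overline{\Ureplace}$ of the boundary part $\hat\mu\restrict{\overline{\Ureplace}\times\Sbb^{N-1}}$ and disintegrating yields $(\nu_x^\infty)_{x\in\overline{\Ureplace}}\subset\Mcal^1(\Sbb^{N-1})$, turning the second term into $\int_{\overline{\Ureplace}}\inner{f^\infty(x,\cdot),\nu_x^\infty}\,d\lambda_\nu$. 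This is precisely the claimed identity for $f\in\Ebf(\Ureplace;\R^N)$.

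Finally I would tie up the regularity requirements. The integrability $x\mapsto\inner{\abs{\cdot},\nu_x}\in L^1(\Ureplace)$ follows by testing with $f(x,A)=\abs{A}$, and the defining measurability of $x\mapsto\inner{f(x,\cdot),\nu_x}$ and $x\mapsto\inner{f^\infty(x,\cdot),\nu_x^\infty}$ is part of the disintegration theorem. To pass from jointly continuous integrands to general Carath\'{e}odory integrands for which $f^\infty$ exists, I would approximate $f$ in the $x$-variable by integrands continuous in $x$ via the Scorza--Dragoni theorem, using the linear growth $\abs{f(x,A)}\le C(1+\abs{A})$ implied by the existence of $f^\infty$ together with dominated convergence against the finite measures $\hat\mu_j$ and $\hat\mu$. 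I expect the disintegration step to be the main obstacle: ensuring via the marginal identity that the oscillation measures $\nu_x$ are genuine probability measures, and carefully separating the interior (oscillation) mass from the boundary (concentration) mass, including mass that concentrates on $\partial\Ureplace$; the extension to Carath\'{e}odory integrands is then a routine but technical measurability argument.
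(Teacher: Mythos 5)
This theorem is quoted in the paper directly from \cite[Theorem~7 and Proposition~2]{KrR10a} without proof, and your compactification argument is precisely the standard proof behind that citation (going back to DiPerna--Majda and Alibert--Bouchitt\'{e}): the transform $Sf$, the lift $\hat\mu_j$, the identification of the first marginal of $(1-\abs{\hat A})\hat\mu$ with $\mathcal{L}^n\restrict{\Ureplace}$, and the disintegration into $(\nu_x,\lambda_\nu,\nu_x^\infty)$ are all correct and are exactly how the cited result is established.

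The one step that would fail as literally written is the final passage to Carath\'{e}odory integrands. First, existence of $f^\infty$ only yields $\abs{f(x,A)}\le C\abs{A}$ for $\abs{A}\ge R$; for bounded $A$ a Carath\'{e}odory integrand need not be dominated by $C(1+\abs{A})$, so some integrability hypothesis (in the paper, the bound \eqref{eq:growth}) must be invoked. More importantly, ``Scorza--Dragoni plus dominated convergence against the finite measures $\hat\mu_j$'' does not close on its own: the Scorza--Dragoni exceptional set $\Ureplace\setminus K_\epsilon$ has small Lebesgue measure but may carry a non-vanishing amount of $\int_{\Ureplace\setminus K_\epsilon}\abs{d\mu_j/dx}\,dx$ uniformly in $j$, so the error term is not controlled by $\abs{\Ureplace\setminus K_\epsilon}$ alone. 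The standard fix uses that the joint limit defining $f^\infty$ is \emph{uniform} on the compact set $\overline{\Ureplace}\times\Sbb^{N-1}$: for every $\epsilon>0$ there is $R$ with $\abs{f(x,A)-f^\infty(x,A)}\le\epsilon\abs{A}$ for all $x$ and $\abs{A}\ge R$. One then splits $f$ into a jointly continuous tail (a cut-off multiple of $f^\infty$, already in the class handled by the compactification), an error of size $\epsilon\sup_j\normn{d\mu_j/dx}_{L^1}$, and a Carath\'{e}odory remainder dominated by an $L^1$ function of $x$ alone, to which Scorza--Dragoni and equi-integrability genuinely apply. With that reordering your argument is complete and coincides with the cited proof.
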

In the setting of the above theorem, we say that $(\mu_j)_j$ generates the Young measure $\nu$. We can also associate to a $\mu \in \Mcal(\overline{\Ureplace};\R^N)$ the elementary Young measure $\delta[\mu] \in Y(\Ureplace;\R^N)$ with
\begin{equation}\label{eq:elementary}
(\delta[\mu])_x = \delta_{\frac{d\mu }{dx}(x)}, \quad \lambda_{\delta[\mu]} = \abs{\mu^s} \quad \text{and} \quad (\delta[\mu])^{\infty}_x = \delta_{\frac{d\mu^s}{d\abs{\mu^s}}(x)},
\end{equation}
with $\delta_A$ the dirac measure at a point $A \in \R^N$. One can then interpret the convergence in Theorem~\ref{th:fundyoung} as weak* convergence of $\delta[\mu_j]$ to $\nu$ in $Y(\Ureplace;\R^N)$, where the duality arises from testing Young measures with integrands $f$. When we have an integrand without a well-defined strong recession function, we can still define the upper recession function
\begin{equation}\label{eq:upperrecession}
f^{\#}(x,A) = \limsup_{\substack{(x',A') \to (x,A) \\ t \to \infty}}\frac{f(x',tA')}{t} \quad \text{for $x \in \overline{\Ureplace}$ and $A \in \R^N$}.
\end{equation}
Then, if $(\mu_j)_j \in \Mcal(\overline{\Ureplace};\R^N)$ generates the Young measure $\nu \in Y(\Ureplace;\R^N)$ and $f:\Ureplace\times\R^N \to \R$ is jointly upper semicontinuous, it holds that
\begin{align}\label{eq:lowerfundyoung}
\begin{split}
\limsup_{j \to \infty}\int_{\Ureplace} f\left(x,\frac{d\mu_j }{dx}\right)\,dx&+\int_{\overline{\Ureplace}} f^{\#}\left(x,\frac{d\mu_j^s}{d\abs{\mu_j^s}}\right)\,d\abs{\mu_j^s} \\
&\leq \int_{\Ureplace} \inner{f(x,\cdot),\nu_x}\,dx + \int_{\overline{\Ureplace}} \inner{f^{\#}(x,\cdot),\nu^{\infty}_x}\,d\lambda_{\nu},
\end{split}
\end{align}
see~\cite[Corollary~2.10]{ADR}.
\subsection{Fractional Calculus}
Here we introduce the fractional operators and their properties, which we use throughout the paper. Firstly, for an integrable function $u \in L^1(\R^n)$, the Riesz potential $I_{\alpha}u$ of order $\alpha \in (0,n)$ is defined as
\begin{equation*}
I_{\alpha}u(x) = \frac{1}{\gamma_{n,\alpha}}\int_{\R^n} \frac{u(y)}{\abs{x-y}^{n-\alpha}}\,dy \quad \text{for $x \in \R^n$},
\end{equation*}
where $\gamma_{n,\alpha}=\pi^{n/2}2^{\alpha}\frac{\Gamma(\alpha/2)}{\Gamma((n-\alpha)/2)}$. It is well-known that the above integral is finite for a.e.~$x \in \R^n$ and $I_{\alpha}u \in L^1_{\rm loc}(\R^n)$, cf.~\cite{Stein, Mizuta}. Next up, we have the three different fractional differential operators: The Riesz fractional gradient, the fractional divergence and the fractional Laplacian. We introduce these notions for the class of bounded Lipschitz functions. Precisely, for $\alpha \in (0,1)$ and $\phi \in \Lipb(\R^n)$ the Riesz fractional gradient $\na \phi:\R^n \to \R^n$ is given by
\begin{equation}\label{eq:rieszfracgrad}
\na \phi (x) = \mu_{n,\alpha}\int_{\R^n} \frac{\phi(y)-\phi(x)}{\abs{y-x}^{n+\alpha}}\frac{y-x}{\abs{y-x}}\,dy \quad \text{for $x \in \R^n$},
\end{equation}
with $\mu_{n,\alpha}=2^{\alpha}\pi^{-n/2}\frac{\Gamma((n+\alpha+1)/2)}{\Gamma((1-\alpha)/2)}$, and the fractional Laplacian $\da{\alpha/2} \phi:\R^n \to \R$ is defined as
\begin{align*}
\da{\alpha/2}\phi(x)= \nu_{n,\alpha}\int_{\Rn}\frac{\phi(y)-\phi(x)}{\abs{y-x}^{n+\alpha}}\,dy \quad \text{for $x\in \R^n$},
\end{align*}
with $\nu_{n,\alpha}=2^{\alpha}\pi^{-n/2}\frac{\Gamma((n+\alpha)/2)}{\Gamma(-\alpha/2)}$. Both these operators are well-defined and bounded functions for $\phi \in \Lipb(\R^n)$, see~\cite[Section~2.2]{comi1}. Finally, for a vector-valued function $\phi \in \Lipb(\R^n;\R^n)$, the fractional divergence $\diva \phi:\R^n \to \R$ is the natural analogue of the fractional gradient 
\begin{equation}\label{eq:fracdivergence}
\diva \phi (x) = \mu_{n,\alpha} \int_{\R^n} \frac{\phi(y)-\phi(x)}{\abs{y-x}^{n+\alpha}}\cdot \frac{y-x}{\abs{y-x}}\,dy \quad \text{for $x \in \R^n$,}
\end{equation}
which is also a well-defined bounded function. We note that it is proven in \cite{Silhavy} that these three fractional differential operators are the unique operators that satisfy translational and rotational invariance, $\alpha$-homogeneity and a weak requirement of continuity. The fractional gradient and divergence are dual, in the sense that for all $\phi \in \Cc(\R^n)$ and $\psi \in \Cc(\R^n;\R^n)$ the integration by parts
\begin{equation}\label{eq:intbyparts}
\int_{\R^n} \phi \,\diva \psi \,dx = - \int_{\R^n} \na \phi \cdot \psi \,dx
\end{equation}
holds. For more on these differential operators, such as composition rules and extension to different orders than $\alpha \in (0,1)$, we refer to \cite{Silhavy}.

\section{Spaces of Bounded Fractional Variation}\label{sec:bv}
The spaces of bounded fractional variation were first introduced by Comi \& Stefani in the recent series of papers \cite{comi1,comi2,comi3}. We recall the definition of these spaces, which is based on the fractional divergence in \eqref{eq:fracdivergence}.
\begin{definition}
Let $\alpha \in (0,1)$. A function $u \in L^1(\R^n)$ belongs to $BV^{\alpha}(\R^n)$ if
\begin{equation}\label{eq:variation}
\sup\left\{\int_{\R^n} u \,\diva \phi \dx \,:\,\phi \in \Cc(\R^n;\R^n), \norm{\phi}_{L^{\infty}(\R^n;\R^n)} \leq 1\right\} < \infty.
\end{equation}
\end{definition}
It follows from the structure theorem \cite[Theorem 3.2]{comi1} that $u \in BV^{\alpha}(\R^n)$ if and only if $u \in L^1(\R^n)$ and there exists a (necessarily unique) finite vector-valued Radon measure $D^{\alpha}u \in \Mcal(\R^n;\R^n)$ such that
\[
\int_{\R^n} u \,\diva \phi \dx = - \int_{\R^n} \phi \cdot \,d{D^{\alpha}u} \qquad\text{for all $\phi \in \Cc(\R^n;\R^n)$}.
\]
The measure $\Da u$ is called the fractional variation measure of $u$ and it constitutes a natural extension of the Riesz fractional gradient \eqref{eq:rieszfracgrad} to the space $\bva(\R^n)$ based on the integration by parts formula \eqref{eq:intbyparts}. The space $\bva(\R^n)$ endowed with the norm
\[
\norm{u}_{\bva(\R^n)}= \norm{u}_{L^1(\R^n)} + \abs{\Da u}(\R^n)
\]
is a Banach space \cite[Corollary~3.4]{comi1}, where $\abs{\Da u}(\R^n)$ denotes the total variation of $\Da u$ on $\R^n$ and equals the left-hand side of \eqref{eq:variation}. One can also decompose
\[
D^{\alpha}u = \na u \dx + D^{\alpha}_su,
\]
where $\na u \in L^1(\R^n;\R^n)$ is the absolutely continuous part of $\Da u$ with respect to the Lebesgue measure and $\Da_s u \in \Mcal(\R^n;\R^n)$ is the singular part. We write $\bva(\R^n;\R^m)$ for the vector-valued analogue with matrix-valued fractional variation. We also introduce the fractional Sobolev space with exponent $p=1$
\[
S^{\alpha,1}(\R^n)=\{u \in \bva(\R^n) \,:\, \Da_s u =0\},
\]
which consists of those $\bva$-functions with an absolutely continuous fractional variation. In fact, these are the functions that have a well-defined weak fractional gradient $\na u \in L^1(\R^n;\R^n)$, see~\cite{comi1,comi2,comi3,KrS22} for more on these fractional Sobolev spaces.

As in \cite{KrS22}, the main tool we use to prove the lower semicontinuity and relaxation result is a method to transform the fractional gradient into the classical gradient and back. It relies on the Riesz potential and fractional Laplacian and is proven in the $BV$-framework in \cite[Lemma~3.28]{comi1}.
\begin{proposition}\label{prop:connection}
Let $\alpha \in (0,1)$, then the following holds:
\begin{itemize}
\item[$(i)$] For $u \in \bva(\R^n)$ one has that $v=I_{1-\alpha}u \in BV_{\rm loc}(\R^n)$ with $Dv=\Da u$ in $\Mcal(\R^n;\R^n)$.
\item[$(ii)$] For $v \in BV(\R^n)$ one has that $u=\dac v \in \bva(\R^n)$ with $\Da u = Dv$ in $\Mcal(\R^n;\R^n)$ and
\[
\norm{u}_{\bva(\R^n)} \leq c_{n,\alpha} \norm{v}_{BV(\R^n)}.
\]
\end{itemize}
\end{proposition}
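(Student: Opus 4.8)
The plan is to promote the operator identities in \eqref{eq:relation}, which hold for $\phi\in C_c^\infty(\R^n)$, to $\bva$- and $BV$-functions by pairing against smooth compactly supported test fields and moving the Riesz potential or fractional Laplacian off the test field and onto $u$ or $v$. From the scalar identity $\na\phi=\nabla I_{1-\alpha}\phi$, applied componentwise and summed (using that $I_{1-\alpha}$ commutes with $\partial_i$), I would first record the vector identity $\diva\psi = I_{1-\alpha}\Div\psi$ for $\psi\in C_c^\infty(\R^n;\R^n)$. Since $I_{1-\alpha}$ has a symmetric convolution kernel it is formally self-adjoint, and since $\dac$ and $I_{1-\alpha}$ are mutually inverse one has $I_{1-\alpha}\dac v = v$; these three facts drive both parts.

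For $(i)$, given $u\in\bva(\R^n)$ put $v=I_{1-\alpha}u$, which lies in $L^1_{\loc}(\R^n)$ by the standard local integrability of the Riesz potential of an $L^1$ function. For every $\psi\in C_c^\infty(\R^n;\R^n)$ I would compute
\[
\int_{\R^n} v\,\Div\psi\dx = \int_{\R^n} u\,I_{1-\alpha}(\Div\psi)\dx = \int_{\R^n} u\,\diva\psi\dx = -\int_{\R^n}\psi\cdot dD^{\alpha}u,
\]
where the first step is self-adjointness of $I_{1-\alpha}$ (Fubini is licit because $I_{1-\alpha}\abs{u}\in L^1_{\loc}$ and $\Div\psi$ has compact support), the second is $\diva\psi=I_{1-\alpha}\Div\psi$, and the third is the defining relation of $D^{\alpha}u$ from the structure theorem. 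This says $Dv=D^{\alpha}u$ distributionally; as the right-hand side is a finite measure and $v\in L^1_{\loc}$, we obtain $v\in BV_{\loc}(\R^n)$. Only local integrability is available here because $I_{1-\alpha}u$ need not lie in $L^1(\R^n)$.

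For $(ii)$, given $v\in BV(\R^n)$ put $u=\dac v$. The membership $u\in\bva(\R^n)$ together with $D^{\alpha}u=Dv$ would follow from the analogous chain, for $\psi\in C_c^\infty(\R^n;\R^n)$,
\[
\int_{\R^n} u\,\diva\psi\dx = \int_{\R^n}(\dac v)\,I_{1-\alpha}(\Div\psi)\dx = \int_{\R^n}(I_{1-\alpha}\dac v)\,\Div\psi\dx = \int_{\R^n} v\,\Div\psi\dx = -\int_{\R^n}\psi\cdot dDv,
\]
using $\diva\psi=I_{1-\alpha}\Div\psi$, self-adjointness of $I_{1-\alpha}$, and $I_{1-\alpha}\dac v=v$; taking the supremum over $\norm{\psi}_{L^\infty}\le 1$ then gives both $u\in\bva(\R^n)$ and $\abs{D^{\alpha}u}(\R^n)\le\abs{Dv}(\R^n)$. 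The genuine obstacle, which the chain above tacitly presupposes, is the \emph{a priori} claim that $u=\dac v\in L^1(\R^n)$ with the quantitative bound $\norm{\dac v}_{L^1(\R^n)}\le c_{n,\alpha}\norm{v}_{BV(\R^n)}$. For this I would use the kernel representation of $\dac$ on $BV$-functions coming from its Fourier symbol $\abs{\xi}^{1-\alpha}$ and the representation of $Dv$, namely $\dac v(x)=c_{n,\alpha}\int_{\R^n}\frac{x-y}{\abs{x-y}^{n+1-\alpha}}\cdot dDv(y)$, whose kernel has size $\abs{x-y}^{-(n-\alpha)}$. A term-by-term $L^1$ estimate diverges, so the decisive point is the cancellation $Dv(\R^n)=0$ (forced by $v\in L^1(\R^n)$ with finite total variation), which allows subtracting the kernel at a reference point to gain integrable decay at infinity; I expect this estimate to be the main difficulty. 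Combining it with $\abs{D^{\alpha}u}(\R^n)=\abs{Dv}(\R^n)\le\norm{v}_{BV(\R^n)}$ yields the asserted bound on $\norm{u}_{\bva(\R^n)}$.

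To make the self-adjointness, the relation $\diva\psi=I_{1-\alpha}\Div\psi$, and the inversion $I_{1-\alpha}\dac v=v$ rigorous away from the smooth setting, I would verify each first for Schwartz (or $C_c^\infty$) functions via \eqref{eq:relation} and Plancherel, and then pass to $u\in\bva(\R^n)$ or $v\in BV(\R^n)$ by mollification, relying on $v_\epsilon=v*\rho_\epsilon\to v$ in $L^1$, $Dv_\epsilon\weaklystar Dv$, $\abs{Dv_\epsilon}(\R^n)\to\abs{Dv}(\R^n)$, and the uniform $L^1$ bound from the previous step to pass to the limit. In short, once the mapping property $\dac:BV(\R^n)\to L^1(\R^n)$ is established the duality identities are essentially bookkeeping, so the full weight of the proposition rests on that single estimate.
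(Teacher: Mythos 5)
The paper does not actually prove this proposition: it is quoted from the literature (Comi--Stefani, \cite[Lemma~3.28]{comi1}), so any self-contained argument you give is necessarily ``different'' from the paper's. Your duality bookkeeping --- the identity $\diva\psi=I_{1-\alpha}\Div\psi$ on test fields, the self-adjointness of $I_{1-\alpha}$, the inversion $I_{1-\alpha}\dac v=v$, and the passage to $\bva$ and $BV$ by Fubini and mollification --- is sound and is essentially how the cited lemma is established; you also correctly isolate the one genuine analytic point, namely that $\dac v\in L^1(\R^n)$ with $\norm{\dac v}_{L^1(\R^n)}\le c_{n,\alpha}\norm{v}_{BV(\R^n)}$.

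Your proposed proof of that key estimate, however, has a real gap. Writing $\dac v(x)=c\int\frac{x-y}{\abs{x-y}^{n+1-\alpha}}\cdot dDv(y)$ and subtracting the kernel at a reference point using $Dv(\R^n)=0$ gives $\abs{K(x-y)-K(x)}\lesssim\abs{y}\,\abs{x}^{-(n+1-\alpha)}$ for $\abs{x}\ge 2\abs{y}$, so the integrability at infinity you gain is paid for by the first moment $\int\abs{y}\,d\abs{Dv}(y)$ --- which a general $v\in BV(\R^n)$ need not have, and no moment hypothesis appears in the statement. The estimate that works avoids pairing with $Dv$ altogether and uses the hypersingular representation $\dac v(x)=\nu_{n,1-\alpha}\int\frac{v(y)-v(x)}{\abs{y-x}^{n+1-\alpha}}\,dy$, split into $\abs{y-x}\ge 1$ and $\abs{y-x}<1$: the far part is bounded by $2\norm{v}_{L^1}\int_{\abs{z}\ge1}\abs{z}^{-(n+1-\alpha)}\,dz\lesssim\norm{v}_{L^1(\R^n)}$ since $n+1-\alpha>n$, while the near part is bounded via Tonelli and the translation estimate $\norm{v(\cdot+z)-v}_{L^1}\le\abs{z}\,\abs{Dv}(\R^n)$ by $\abs{Dv}(\R^n)\int_{\abs{z}<1}\abs{z}^{-(n-\alpha)}\,dz\lesssim\abs{Dv}(\R^n)$ since $n-\alpha<n$. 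This yields $\norm{\dac v}_{L^1(\R^n)}\le c_{n,\alpha}\norm{v}_{BV(\R^n)}$ with no cancellation required, and it is the route taken in \cite[Lemma~3.28]{comi1}. With that substitution, the rest of your argument goes through.
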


Another ingredient we need is the Leibniz rule for the fractional variation, in order to employ localization techniques. We define for $\phi \in \Ccr$ and $\psi \in \Lip_c(\R^n)$ the operator
\[
\nanl(\phi,\psi)(x) = \mu_{n,\alpha}\int_{\R^n}\frac{(y-x)(\phi(y)-\phi(x))(\psi(y)-\psi(x))}{\abs{y-x}^{n+\alpha+1}}\dy, \quad \text{for $x \in \R^n$},
\]
which can be continuously extended to $\phi \in L^1(\R^n)$. The following Leibniz rule for $\bva$-functions is from \cite[Lemma~5.6]{Comiprecise}, see also~\cite{Comileibniz} for more general Leibniz rules.
\begin{lemma}\label{le:leibniz}
Let $\alpha \in (0,1)$, $\psi \in \Lip_c(\R^n)$ and $u \in \bva(\R^n)$. Then, $\psi u \in \bva(\R^n)$ with
\begin{equation*}
\Da(\psi u) = \psi \Da u+ (u\na \psi + \nanl(u,\psi))\dx
\end{equation*}
and there is a constant $C=C(n,\alpha)>0$ such that
\begin{equation}\label{eq:leibnizbound}
\norm{u\na \psi + \nanl(u,\psi)}_{L^1(\R^n;\R^n)}\leq C\norm{\psi}_{L^{\infty}(\R^n)}^{1-\alpha}\Lip(\psi)^{\alpha}\norm{u}_{L^1(\R^n)}.
\end{equation}
\end{lemma}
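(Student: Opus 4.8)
The plan is to verify the claimed distributional identity directly from the definition of the fractional variation, namely by computing $\int_{\R^n}(\psi u)\diva\varphi\,dx$ for an arbitrary test field $\varphi\in\Cc(\R^n;\R^n)$ and matching it against the asserted measure. The engine is a pointwise Leibniz rule for the fractional divergence. Starting from the algebraic splitting
\[
\psi(y)\varphi(y)-\psi(x)\varphi(x)=\psi(x)(\varphi(y)-\varphi(x))+\varphi(x)(\psi(y)-\psi(x))+(\psi(y)-\psi(x))(\varphi(y)-\varphi(x)),
\]
inserted into the kernel \eqref{eq:fracdivergence} defining $\diva(\psi\varphi)$, one obtains the identity $\diva(\psi\varphi)=\psi\diva\varphi+\varphi\cdot\na\psi+R(\varphi,\psi)$, where $R(\varphi,\psi)(x)=\mu_{n,\alpha}\int_{\R^n}\frac{(\psi(y)-\psi(x))(\varphi(y)-\varphi(x))\cdot(y-x)}{\abs{y-x}^{n+\alpha+1}}\,dy$. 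Solving for $\psi\diva\varphi$ and integrating against $u$ splits $\int_{\R^n}(\psi u)\diva\varphi\,dx$ into three pieces.

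The first piece is $\int_{\R^n}u\,\diva(\psi\varphi)\,dx=-\int_{\R^n}\psi\varphi\cdot\,d\Da u$ by the definition of $\Da u$, and the second is plainly $-\int_{\R^n}\varphi\cdot(u\na\psi)\,dx$. For the third piece I would symmetrize: since the kernel $\frac{y-x}{\abs{y-x}^{n+\alpha+1}}$, the factor $\psi(y)-\psi(x)$, and the factor $\varphi(y)-\varphi(x)$ are each odd under the interchange $x\leftrightarrow y$, Fubini combined with the swap $x\leftrightarrow y$ in the $\varphi(y)$-part turns $\int_{\R^n}u\,R(\varphi,\psi)\,dx$ into $\int_{\R^n}\varphi\cdot\nanl(u,\psi)\,dx$, moving the difference quotient off $\varphi$ and onto $u$. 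Collecting the three pieces gives exactly $\int_{\R^n}(\psi u)\diva\varphi\,dx=-\int_{\R^n}\varphi\cdot\,d\bigl[\psi\Da u+(u\na\psi+\nanl(u,\psi))\,dx\bigr]$, which is the asserted formula once the corrector is shown to lie in $L^1$.

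The corrector estimate \eqref{eq:leibnizbound} is where I would spend the effort, and it is streamlined by the observation that the two lower-order terms combine into a single kernel: reading $u\na\psi$ as the $u(x)$-contribution and $\nanl(u,\psi)$ as the $(u(y)-u(x))$-contribution, the $u(x)$ terms cancel and one finds the clean identity
\[
u(x)\na\psi(x)+\nanl(u,\psi)(x)=\mu_{n,\alpha}\int_{\R^n}\frac{(y-x)(\psi(y)-\psi(x))}{\abs{y-x}^{n+\alpha+1}}\,u(y)\,dy.
\]
Taking absolute values, applying Fubini, and using $\abs{\psi(y)-\psi(x)}\le\min\{2\norm{\psi}_{L^\infty(\R^n)},\Lip(\psi)\abs{y-x}\}$, I would split the inner $x$-integral at the radius $\abs{y-x}=\delta$. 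The near-diagonal part is controlled by $\Lip(\psi)\int_{\abs{z}<\delta}\abs{z}^{1-n-\alpha}\,dz\sim\Lip(\psi)\delta^{1-\alpha}$ and the far part by $\norm{\psi}_{L^\infty(\R^n)}\int_{\abs{z}\ge\delta}\abs{z}^{-n-\alpha}\,dz\sim\norm{\psi}_{L^\infty(\R^n)}\delta^{-\alpha}$; both bounds are uniform in $y$, so optimizing at $\delta=\norm{\psi}_{L^\infty(\R^n)}/\Lip(\psi)$ produces the geometric mean $\norm{\psi}_{L^\infty(\R^n)}^{1-\alpha}\Lip(\psi)^{\alpha}$ and, after the outer integration against $u(y)$, the factor $\norm{u}_{L^1(\R^n)}$. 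This simultaneously certifies that the corrector is in $L^1$ and delivers \eqref{eq:leibnizbound}.

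The main obstacle is that $\psi$ is merely Lipschitz, so $\psi\varphi$ is only Lipschitz with compact support and is not admissible a priori in the distributional definition of $\Da u$, which is phrased with smooth test fields. I would circumvent this by first proving the identity for $\psi\in\Cc(\R^n)$, where $\psi\varphi\in\Cc(\R^n;\R^n)$ is directly admissible and Fubini is unproblematic, and then approximating a general Lipschitz $\psi$ by mollifications $\psi_\epsilon=\psi*\rho_\epsilon$, which satisfy $\norm{\psi_\epsilon}_{L^\infty(\R^n)}\le\norm{\psi}_{L^\infty(\R^n)}$, $\Lip(\psi_\epsilon)\le\Lip(\psi)$ and $\psi_\epsilon\to\psi$ uniformly. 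The uniform convergence passes to the limit in $\psi_\epsilon\diva\varphi$ and in $\psi_\epsilon\Da u$, while the bound \eqref{eq:leibnizbound}, applied to the differences $\psi_\epsilon-\psi_{\epsilon'}$, shows the correctors form a Cauchy sequence in $L^1(\R^n;\R^n)$ converging to $u\na\psi+\nanl(u,\psi)$; this stability is precisely what makes the estimate the linchpin of the argument.
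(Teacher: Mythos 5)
Your proof is correct. Note, however, that the paper does not prove this lemma at all: it imports it verbatim from the literature (Comi--Stefani, \cite[Lemma~5.6]{Comiprecise}), so there is no in-paper argument to compare against. Your reconstruction is a valid self-contained derivation and follows the natural (and essentially standard) route: the three-term splitting of $\psi(y)\varphi(y)-\psi(x)\varphi(x)$ correctly produces $\diva(\psi\varphi)=\psi\diva\varphi+\varphi\cdot\na\psi+R(\varphi,\psi)$; the symmetrization of $\int u\,R(\varphi,\psi)\,dx$ into $\int\varphi\cdot\nanl(u,\psi)\,dx$ checks out (the relevant point is that $(y-x)(\psi(y)-\psi(x))\,\abs{y-x}^{-n-\alpha-1}$ is \emph{symmetric} under $x\leftrightarrow y$, being a product of two odd factors, so swapping variables in the $\varphi(y)$-part transfers the difference quotient onto $u$); and the collapse of $u\na\psi+\nanl(u,\psi)$ into the single kernel $\mu_{n,\alpha}\int (y-x)(\psi(y)-\psi(x))\abs{y-x}^{-n-\alpha-1}u(y)\,dy$ is exactly the right observation, after which Tonelli plus the split of $\int\min\{2\norm{\psi}_\infty,\Lip(\psi)\abs{z}\}\abs{z}^{-n-\alpha}\,dz$ at the optimal radius yields the stated geometric-mean bound with the correct exponents $1-\alpha$ and $\alpha$. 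Your handling of the regularity mismatch (the definition of $\Da u$ tests against $C_c^\infty$ fields, while $\psi\varphi$ is only Lipschitz) via mollification is also sound, and the key stability point is well identified: since $\Lip(\psi_\epsilon-\psi)\leq 2\Lip(\psi)$ while $\norm{\psi_\epsilon-\psi}_{L^\infty}\to 0$, the bound \eqref{eq:leibnizbound} applied to differences gives $L^1$-convergence of the correctors precisely because only the $(1-\alpha)$-power of the sup norm enters. The one cosmetic blemish is the clash of notation with the letter $g$-free but $\varphi$-laden paper conventions; mathematically I see no gap.
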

\begin{remark}\label{rem:locality}
Even though the fractional variation is a nonlocal object, the above Leibniz rule implies that the singular part of the fractional variation behaves locally. Indeed, if we have $u,v \in \bva(\R^n)$ with $u=v$ in an open set $U \subset \R^n$, we find for all $\chi \in C_c^{\infty}(U)$ (extended to $\R^n$ as zero) that
\[
\chi \Da_s u = \Da_s(\chi u) = \Da_s (\chi v) = \chi \Da_s v. \qedhere
\]
\end{remark}
For our minimization problems, we restrict to functions satisfying a complementary-value condition, which is a nonlocal counterpart of the common Dirichlet boundary conditions. For $\Omega \subset \R^n$ open and bounded we define
\[
\bva_0(\Omega) = \{ u \in \bva(\R^n) \,:\, u = 0 \ \text{a.e.~in $\Omega^c$}\},
\]
and for $g \in S^{\alpha,1}(\R^n)$
\[
\bvag(\Omega) =g+\bva_0(\R^n).
\]
Here, we take $g \in S^{\alpha,1}(\R^n)$ since our initial motivation comes from studying linear growth functionals on the fractional Sobolev space. With this in mind, we also introduce the spaces $S^{\alpha,1}_0(\Omega)$ and $S^{\alpha,1}_g(\Omega)$ in a similar way as above. For $u \in \bvag(\Omega)$, it follows that the singular part $\Da_s u$ has support inside $\overline{\Omega}$, because of the local behavior of the singular part of the fractional variation (cf.~Remark~\ref{rem:locality}).

A key reason to consider the fractional $BV$-spaces as extension of the fractional Sobolev spaces, is the property that bounded sequences have convergent subsequences in $\bvag(\Omega)$ in an appropriate sense. We say that $(u_j)_j \subset \bvag(\Omega)$ converges weak* to $u \in \bvag(\Omega)$ if
\begin{equation*}
u_j \to u \ \text{in $L^1(\R^n)$} \quad \text{and} \quad \Da u_j \starto \Da u \ \text{in $\Mcal(\R^n;\R^n)$ as $j \to \infty$}.
\end{equation*}
A direct application of the Banach-Alaoglu theorem and the compactness result \cite[Theorem~3.16]{comi1} shows that bounded sequences in $\bvag(\Omega)$ admit weak* convergent subsequences.
Moreover, we have the following result stating that weak* convergence improves to strong $L^1$-convergence outside $\Omega$. We omit the proof as it is almost identical to that of \cite[Lemma~2.12]{KrS22}.
\begin{lemma}\label{le:strongoutside}
Let $\alpha \in (0,1)$, $\Omega$ be open and bounded and $g \in S^{\alpha,1}(\R^n)$. If $(u_j)_j \subset \bvag(\Omega)$ converges weak* to $u$ in $\bvag(\Omega)$, then for every open $\Omega' \Supset \Omega$ we find
\begin{equation*}
\na u_j \to \na u \ \text{in $L^1((\Omega')^c;\R^n)$ as $j \to \infty$}.
\end{equation*}
\end{lemma}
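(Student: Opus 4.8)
The plan is to exploit the following mechanism behind the statement: although every $u_j$ coincides with $g$ on $\Omega^c$, the nonlocality of $\na$ means that $\na u_j$ is \emph{not} determined by $g$ there; however, the value of $\na u_j$ at a point with positive distance from $\overline{\Omega}$ depends on $u_j$ only through a \emph{smoothing} integral operator with bounded kernel, which converts the strong $L^1$-convergence $u_j \to u$ into strong $L^1$-convergence of the gradients off $\Omega'$. Accordingly, I would first reduce to the homogeneous case: writing $w_j = u_j - g \in \bva_0(\Omega)$ and $w = u - g$, the linearity of the Radon--Nikod\'ym decomposition gives $\na u_j = \na g + \na w_j$ and $\na u = \na g + \na w$, so it suffices to prove $\na w_j \to \na w$ in $L^1((\Omega')^c;\R^n)$. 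Note that weak* convergence supplies $u_j \to u$, hence $w_j \to w$, in $L^1(\R^n)$, that each $w_j$ vanishes a.e.\ outside $\Omega$, and set $\delta = \dist(\overline{\Omega},(\Omega')^c) > 0$ (positive since $\overline{\Omega}$ is compact and contained in the open set $\Omega'$).

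The key step is to establish, for every $w \in \bva_0(\Omega)$, the representation
\[
\na w(x) = \mu_{n,\alpha}\int_{\overline{\Omega}}\frac{(y-x)\,w(y)}{\abs{y-x}^{n+\alpha+1}}\dy =: Tw(x) \qquad \text{for a.e.\ } x \in (\Omega')^c.
\]
To justify this I would test the measure $\Da w$ against an arbitrary $\phi \in \Cc((\overline{\Omega})^c;\R^n)$ via the defining duality $\int_{\R^n}\phi\cdot\,d\Da w = -\int_{\R^n} w\,\diva\phi\dx = -\int_{\overline{\Omega}} w\,\diva\phi\dx$. Because $\phi \equiv 0$ on $\overline{\Omega}$, the self-interaction term $\phi(x)$ drops out of $\diva\phi(x)$ for $x \in \overline{\Omega}$, leaving $\diva\phi(x) = \mu_{n,\alpha}\int_{\R^n}\frac{\phi(z)\cdot(z-x)}{\abs{z-x}^{n+\alpha+1}}\,dz$; since the kernel is bounded on $\overline{\Omega}\times\supp\phi$, Fubini's theorem rearranges the expression into $\int_{\R^n}\phi\cdot Tw\dx$. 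This identifies $\Da w$ restricted to the open set $(\overline{\Omega})^c$ as the absolutely continuous measure $Tw\dx$, so in particular $\na w = Tw$ a.e.\ on $(\Omega')^c$. (Equivalently, the locality of the singular part from Remark~\ref{rem:locality} already shows $\Da w$ is absolutely continuous away from $\overline{\Omega}$, after which one identifies its density.)

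It then remains to observe that $T$ is a bounded linear operator from $L^1(\overline{\Omega})$ into $L^1((\Omega')^c;\R^n)$. By Tonelli's theorem,
\[
\norm{Tw}_{L^1((\Omega')^c)} \leq \mu_{n,\alpha}\int_{\overline{\Omega}}\abs{w(y)}\left(\int_{(\Omega')^c}\frac{1}{\abs{y-x}^{n+\alpha}}\dx\right)\dy \leq C(n,\alpha)\,\delta^{-\alpha}\norm{w}_{L^1(\overline{\Omega})},
\]
because $\abs{y-x}\geq\delta$ for $y \in \overline{\Omega}$ and $x \in (\Omega')^c$ makes the inner integral finite and uniformly bounded in $y$. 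Applying this to $w_j - w$ and using the previous step gives
\[
\norm{\na w_j - \na w}_{L^1((\Omega')^c)} = \norm{T(w_j - w)}_{L^1((\Omega')^c)} \leq C(n,\alpha)\,\delta^{-\alpha}\,\norm{w_j - w}_{L^1(\R^n)} \to 0,
\]
which is the desired conclusion. The only genuinely delicate point is the representation step, where one must handle the definition of $\Da w$ on the complement of $\overline{\Omega}$ correctly; the remaining bounds are elementary kernel estimates. It is worth emphasizing that only the $L^1$-convergence of the functions (and not the weak* convergence of the measures $\Da u_j$) is actually used, which is precisely why the conclusion gives \emph{strong} convergence of the gradients at a positive distance from $\Omega$.
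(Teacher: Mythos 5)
Your proof is correct, and it follows essentially the same route as the one the paper invokes: the paper omits the argument and refers to the analogue in \cite[Lemma~2.12]{KrS22}, whose mechanism is exactly your kernel representation $\na(u-g)(x)=\mu_{n,\alpha}\int_{\Omega}\frac{(y-x)(u-g)(y)}{\abs{y-x}^{n+\alpha+1}}\,dy$ for $x$ at positive distance from $\overline{\Omega}$, combined with the uniform bound $\int_{(\Omega')^c}\abs{y-x}^{-n-\alpha}\,dx\leq C\delta^{-\alpha}$. The only simplification in the $BV^{\alpha}$ setting, which you correctly exploit, is that the strong $L^1$-convergence $u_j\to u$ is already part of the definition of weak* convergence, so no compact embedding is needed.
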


We also need an improved version of the Poincar\'{e} inequality for fractional $BV$-functions in \cite{comi2}, which only requires a bound on the fractional variation on some open and bounded set as opposed to the whole space $\R^n$. This allows us to consider interesting integrands with slightly weaker coercivity properties, such as the area-integrand in Example~\ref{ex:area}.
\begin{proposition}\label{prop:poincare}
Let $\alpha \in (0,1)$ and $\Omega$ be open and bounded, then there exists an open and bounded set $\Omega' \Supset \Omega$ and a constant $C=C(\Omega,n,\alpha)>0$ such that
\[
\norm{u}_{\bva(\R^n)} \leq C \abs{\Da u}(\Omega'),
\]
for all $u \in \bva_0(\Omega)$.
\end{proposition}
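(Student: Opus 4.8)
The plan is to fix $\Omega'$ to be a large open ball $B_R$ with $\overline{\Omega}\Subset B_R$ and to reduce the claimed estimate to two ingredients: a \emph{tail estimate} bounding the fractional variation outside $\Omega'$ by the $L^1$-norm, and an $L^1$-\emph{Poincar\'e inequality} $\norm{u}_{L^1(\R^n)}\leq C\abs{\Da u}(\Omega')$. Writing $\norm{u}_{\bva(\R^n)} = \norm{u}_{L^1(\R^n)} + \abs{\Da u}(\Omega') + \abs{\Da u}((\Omega')^c)$ and splitting $\R^n$ into $\Omega'$ and its complement, the tail estimate $\abs{\Da u}((\Omega')^c)\leq C_1\norm{u}_{L^1(\R^n)}$ together with the $L^1$-Poincar\'e inequality $\norm{u}_{L^1(\R^n)}\le C_2\abs{\Da u}(\Omega')$ immediately yields $\norm{u}_{\bva(\R^n)}\leq \bigl((1+C_1)C_2+1\bigr)\abs{\Da u}(\Omega')$.

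For the tail estimate I would invoke Proposition~\ref{prop:connection}(i): the potential $v := I_{1-\alpha}u$ lies in $BV_{\rm loc}(\R^n)$ and satisfies $Dv = \Da u$. Since $u\in\bva_0(\Omega)$ vanishes on $\Omega^c$, on the open set $(\overline{\Omega})^c\supset(\Omega')^c$ the potential $v(x)=c_{n,\alpha}\int_{\Omega}\abs{x-y}^{-(n-1+\alpha)}u(y)\dy$ is smooth, so there $\Da u$ carries no singular part and $\na u = \nabla v$. Differentiating under the integral sign gives $\abs{\na u(x)} = \abs{\nabla v(x)}\leq C\int_{\Omega}\abs{x-y}^{-(n+\alpha)}\abs{u(y)}\dy$ for $x\in(\Omega')^c$. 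Because $\dist(\Omega,(\Omega')^c)=:\delta>0$, Fubini together with the bound $\int_{\{\abs{z}\geq\delta\}}\abs{z}^{-(n+\alpha)}\,dz<\infty$ yields $\int_{(\Omega')^c}\abs{\na u}\dx\leq C_1\norm{u}_{L^1(\R^n)}$; and since the singular part of $\Da u$ is supported in $\overline{\Omega}\subset\Omega'$, this controls all of $\abs{\Da u}((\Omega')^c)$.

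The $L^1$-Poincar\'e inequality I would prove by contradiction. If it failed there would exist $u_j\in\bva_0(\Omega)$ with $\norm{u_j}_{L^1(\R^n)}=1$ and $\abs{\Da u_j}(\Omega')\to0$. The tail estimate keeps $\abs{\Da u_j}(\R^n)$ bounded, so $(u_j)_j$ is bounded in $\bva(\R^n)$ and, being supported in $\Omega$, admits a subsequence converging weak* in $\bva_0(\Omega)$ to some $u$ (Banach--Alaoglu together with \cite[Theorem~3.16]{comi1}); in particular $u_j\to u$ in $L^1$, so $\norm{u}_{L^1(\R^n)}=1$, while lower semicontinuity of the total variation on the open set $\Omega'$ gives $\abs{\Da u}(\Omega')\leq\liminf_j\abs{\Da u_j}(\Omega')=0$. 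It then remains to establish the \emph{rigidity} statement that $u\in\bva_0(\Omega)$ with $\abs{\Da u}(\Omega')=0$ forces $u=0$, contradicting $\norm{u}_{L^1(\R^n)}=1$.

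For the rigidity, set again $v=I_{1-\alpha}u$. Since $\abs{Dv}(\Omega')=\abs{\Da u}(\Omega')=0$ and $\Omega'=B_R$ is connected, $v$ equals a constant $c$ on $\Omega'$. On $(\overline{\Omega})^c$ the potential $v$ is real-analytic (being the Riesz potential of an $L^1$-function supported in $\overline{\Omega}$, evaluated off its support), and it is constant $\equiv c$ on the nonempty open annulus $\{R'<\abs{x}<R\}\subset B_R\setminus\overline{\Omega}$, where $\overline{\Omega}\subset B_{R'}$ with $R'<R$. The identity theorem for real-analytic functions propagates $v\equiv c$ to the exterior region $\{\abs{x}>R'\}$, and since $\abs{v(x)}\leq C\norm{u}_{L^1(\R^n)}\dist(x,\Omega)^{-(n-1+\alpha)}\to0$ as $\abs{x}\to\infty$, we get $c=0$. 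Hence $v=0$ on $B_R$ and on $\{\abs{x}>R'\}$, i.e.\ $v=I_{1-\alpha}u\equiv0$, and the injectivity of the Riesz potential on $L^1(\R^n)$ (its Fourier symbol $\abs{\xi}^{-(1-\alpha)}$ is nonvanishing for $\xi\neq0$, while $\widehat{u}$ is continuous) forces $u=0$. The main obstacle is precisely this rigidity step: because $\Da u$ is nonlocal, the vanishing of the variation on the bounded set $\Omega'$ imposes no purely local constraint on $u$, so one genuinely needs the analytic continuation of the Riesz potential off the support of $u$ together with its decay at infinity to turn the local information on $\Omega'$ into the global conclusion $u=0$.
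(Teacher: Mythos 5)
Your proof is correct, but it follows a genuinely different route from the paper's. You both reduce the claim to (i) a tail bound $\abs{\Da u}((\Omega')^c)\leq C_1\norm{u}_{L^1(\R^n)}$ and (ii) an $L^1$-Poincar\'{e} inequality, but you prove each differently. For (i) the paper applies the fractional Leibniz rule (Lemma~\ref{le:leibniz}) to a cut-off $\chi$ with small Lipschitz constant $r$, which yields the quantitative bound $Cr^{\alpha}\norm{u}_{L^1(\R^n)}$; you instead differentiate the Riesz potential $v=I_{1-\alpha}u$ off $\supp u$ and integrate the kernel $\abs{x-y}^{-(n+\alpha)}$ over $\{\abs{x-y}\geq\delta\}$, which is equally valid and works for \emph{any} $\Omega'\Supset\Omega$. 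For (ii) the paper is quantitative: it combines H\"{o}lder's inequality on Lorentz spaces with the weak Gagliardo--Nirenberg--Sobolev inequality of \cite[Theorem~3.8]{comi2} to get $\norm{u}_{L^1(\Omega)}\leq c_{n,\alpha,\Omega}\abs{\Da u}(\R^n)$, and then absorbs the tail by choosing $r$ so small that $Cr^{\alpha}\leq(2c_{n,\alpha,\Omega})^{-1}$. You argue by compactness and contradiction, which forces you to prove a rigidity statement; your unique-continuation argument (constancy of $v$ on $\Omega'$, real-analyticity of the Riesz potential off $\overline{\Omega}$, decay at infinity, and injectivity of $I_{1-\alpha}$ on $L^1$) is sound, including in the case $n=1$ where the exterior is disconnected but both components meet the ball on which $v$ is constant. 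The trade-off is clear: the paper's argument gives an explicit constant and avoids analytic continuation and Fourier analysis, at the price of invoking the Lorentz-scale GNS inequality and of only producing the estimate for $\Omega'\supset\Omega_r$ with $r$ small; your argument needs no GNS inequality and yields the inequality for every $\Omega'\Supset\Omega$, but the constant is non-constructive and the rigidity step genuinely uses the nonlocal structure of $\Da$ (as you rightly flag, vanishing of $\abs{\Da u}$ on a bounded set alone constrains nothing locally). Both proofs are legitimate; for the application in Corollary~\ref{cor:existence} the paper's quantitative relation \eqref{eq:decaybound}--\eqref{eq:strongpoincare} between the tail and the interior variation is reused, so if you adopted your version you would still want to record your tail estimate separately.
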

\begin{proof}
Define for $r>0$ the function
\[
\chi:\R^n \to \R, \quad \chi(x) = \max\{1-r\,d(x,\Omega),0\},
\]
with $d(x,\Omega)$ the distance from $x$ to $\Omega$. Then, we have $\chi \in \Lip_c(\R^n)$, $\Lip(\chi) \leq r$, $\chi \equiv 1$ on $\Omega$ and 
\[
\supp(\chi) = \Omega_r:=\{x \in \R^n \,:\, d(x,\Omega) \leq 1/r\}.
\]
We deduce that $u=\chi u$ and conclude from the Leibniz rule (Lemma~\ref{le:leibniz}) that
\[
\Da u= \Da (\chi u)=\chi \Da u + (u\na \chi + \nanl(u,\chi))\dx.
\]
Therefore, we find by \eqref{eq:leibnizbound} that
\begin{equation}\label{eq:decaybound}
\norm{\na u}_{L^1(\Omega^c_r;\R^n)} = \norm{u\na \chi + \nanl(u,\chi)}_{L^1(\Omega^c_r;\R^n)} \leq Cr^{\alpha}\norm{u}_{L^1(\R^n)}.
\end{equation}
Now using H\"{o}lder's inequality on the scale of Lorentz spaces, see~\cite[Chapter~1.4]{Grafakos} for an introduction on Lorentz spaces, in combination with the weak Gagliardo-Nirenberg-Sobolev inequality from \cite[Theorem~3.8]{comi2} yields
\begin{equation}\label{eq:gnsineq}
\norm{u}_{L^1(\Omega)} \leq \norm{\mathbbm{1}_{\Omega}}_{L^{\frac{n}{\alpha},1}(\R^n)}\norm{u}_{L^{\frac{n}{n-\alpha},\infty}(\R^n)} \leq \frac{n\abs{\Omega}^{\alpha/n}}{\alpha}c_{n,\alpha}\abs{\Da u}(\R^n)=c_{n,\alpha,\Omega}\abs{\Da u}(\R^n),
\end{equation}
for all $u \in \bva_0(\Omega)$. If we choose $r>0$ such that $Cr^{\alpha} \leq (2c_{n,\alpha,\Omega})^{-1}$ we obtain from \eqref{eq:decaybound} and \eqref{eq:gnsineq} that
\begin{align*}
\norm{u}_{L^{1}(\Omega)} &\leq c_{n,\alpha,\Omega}\left(\abs{\Da u}(\Omega_r)+\abs{\Da u}(\Omega_r^c)\right)\\
&\leq c_{n,\alpha,\Omega} \left(\abs{\Da u}(\Omega_r)+\frac{1}{2c_{n,\alpha,\Omega}}\norm{u}_{L^1(\Omega)}\right),
\end{align*}
which, after rewriting, becomes
\begin{equation}\label{eq:strongpoincare}
\norm{u}_{L^1(\Omega)} \leq 2c_{n,\alpha,\Omega}\abs{\Da u}(\Omega_r).
\end{equation}
Therefore, we obtain
\begin{align*}
\norm{u}_{\bva(\R^n)} &= \norm{u}_{L^1(\Omega)}+\abs{\Da u}(\Omega_r)+\abs{\Da u}(\Omega_r^c)\\
&\leq \left(1+\frac{1}{2c_{n,\alpha,\Omega}}\right)\norm{u}_{L^1(\Omega)} + \abs{\Da u}(\Omega_r)\\
&\leq (2c_{n,\alpha,\Omega}+2) \abs{\Da u}(\Omega_r),
\end{align*}
which proves the result with any open set $\Omega' \Supset \Omega_r$.
\end{proof}

Next, to extend the linear growth functionals from $S^{\alpha,1}_g(\Omega;\R^m)$ to $\bvagom$ we need to be able to approximate functions in $\bvagom$ with functions in $S^{\alpha,1}_g(\Omega;\R^m)$ in a strong enough sense to also have convegence of the functional values. However, since $\Sonegom$ is closed with respect to the $\bva$-norm, we have to consider a convergence notion that is also weaker than the one induced by the norm. The relevant notion here is a type of area-strict convergence, which is in between norm convergence and weak* convergence. Like in \cite{KrR10b}, we define the area-functional for $\mu \in \Mcal(\R^n;\R^N)$ and $U \subset \R^n$ Borel measurable as
\[
\inner{\mu}(U):=\int_{U}\sqrt{1+\abs*{\frac{d\mu }{dx}}^2}\dx+\abs{\mu^s}(U),
\]
with $\mu^s$ the singular part of $\mu$ with respect to the Lebesgue measure. We also write $\inner{A}:=\sqrt{1+\abs{A}^2}$ for $A \in \Rmn$.
\begin{definition}[area-strict convergence]\label{def:areastrict} 
We say that a sequence $(u_j)_j \subset \bvagom$ converges area-strictly to $u \in \bvagom$ if $u_j \to u \ \text{in $L^1(\R^n;\R^m)$}$,
\[
\inner{\Da u_j}(\overline{\Omega}) \to \inner{\Da u}(\overline{\Omega}) \quad \text{and} \quad \na u_j \to \na u \ \text{in $L^1(\Omega^c;\Rmn)$} \ \text{as $j \to \infty$}.
\]

\end{definition}
\begin{remark}\label{rem:areastrict}
The key property of area-strict convergence is that when restricted to $\overline{\Omega}$, the sequence $(\Da u_j)_j \subset \Mcal(\overline{\Omega};\Rmn)$ generates the elementary Young measure $\delta[\Da u]$ (cf.~\eqref{eq:elementary} and \cite[Proposition~12.4]{Rindler}). The convergence $\na u_j \to \na u \ \text{in $L^1(\Omega^c;\Rmn)$}$ also excludes any concentration effects happening outside $\Omega$, which are in general not ruled out by Lemma~\ref{le:strongoutside}. 
\end{remark} 
We now prove a density result with respect to the area-strict convergence, which plays a key role in the construction of a recovery sequence when extending the linear growth functionals. The proof exploits the fractional Leibniz rule and invariance properties of the fractional variation to incorporate the partition of unity and mollification techniques from the classical case (as in e.g.~\cite[Lemma~11.1]{Rindler}). Note that we implicitly assume that functions in $C_c^{\infty}(\Omega;\R^m)$ are extended to $\R^n$ as zero.
\begin{theorem}\label{th:density}
Let $\alpha \in (0,1)$, $\Omega \subset\R^n$ be a bounded Lipschitz domain and $g \in \Sone(\R^n;\R^m)$. For every $u \in \bvagom$ there exists a sequence $(u_j)_j \subset g+C_c^{\infty}(\Omega;\R^m)$ such that
\[
u_j \to u \ \text{area-strictly in $\bvagom$}.
\]
\end{theorem}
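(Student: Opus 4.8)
The plan is to mimic the classical strict‑density scheme (cf.~\cite[Lemma~11.1]{Rindler}), replacing the local gradient by $\na$ and the classical Leibniz and scaling identities by their fractional analogues. Write $u=g+w$ with $w\in\bva_0(\Omega;\R^m)$; the task is to produce $\phi_j\in C_c^\infty(\Omega;\R^m)$ with $g+\phi_j\to u$ area‑strictly. Since $g\in\Sone$ and the $\phi_j$ are smooth, $\Da_s(g+\phi_j)=0$, and the $\phi_j$ leave the values fixed outside $\Omega$; thus it suffices to build $\phi_j\to w$ in $L^1(\R^n;\R^m)$ with $\na\phi_j\to\na w$ in $L^1(\Omega^c;\Rmn)$ and $\inner{\Da(g+\phi_j)}(\overline\Omega)\to\inner{\Da u}(\overline\Omega)$. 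Two structural facts drive the argument. First, $\na$ is translation invariant and commutes with convolution, so $\Da(w*\rho_\epsilon)=(\Da w)*\rho_\epsilon$; combined with the area‑strict convergence of mollified measures and the $1$‑Lipschitz continuity of $A\mapsto\inner{A}$, mollification converges area‑strictly on any open set containing $\supp w$. Second, applying the fractional Leibniz rule (Lemma~\ref{le:leibniz}) to a partition of unity $\sum_k\theta_k\equiv1$ makes the correction terms cancel \emph{exactly}, since $\na\theta_k$ and $\nanl(w,\cdot)$ are linear in $\theta_k$ and vanish on constants: $\sum_k\bigl(w\na\theta_k+\nanl(w,\theta_k)\bigr)=0$, whence $\sum_k\Da(\theta_k w)=\Da w$ with no error.

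First I would handle the boundary. As $\Omega$ is a bounded Lipschitz domain, pick finitely many charts $U_1,\dots,U_N$ covering $\partial\Omega$ in which $\Omega$ is the subgraph of a Lipschitz function along an inward direction $e_k$, an interior set $U_0\Subset\Omega$, and a subordinate partition of unity $\{\theta_k\}_{k=0}^N$. For small $\tau>0$ set $w_\tau:=\sum_k(\theta_k w)(\,\cdot-\tau e_k)$ with $e_0:=0$; transversality of the $e_k$ to the graphs gives $\supp w_\tau\subset\Omega$ with $\dist(\supp w_\tau,\partial\Omega)\ge c\tau>0$. By translation invariance the absolutely continuous part of $\Da w_\tau$ is $\sum_k[\na(\theta_k w)](\,\cdot-\tau e_k)$, and since each $\na(\theta_k w)=\theta_k\na w+w\na\theta_k+\nanl(w,\theta_k)\in L^1(\R^n;\Rmn)$ (bounded through \eqref{eq:leibnizbound}) and translation is $L^1$‑continuous, $\na w_\tau\to\na w$ in $L^1(\R^n;\Rmn)$, hence in $L^1(\Omega^c;\Rmn)$, as $\tau\to0$. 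The singular part is $\Da_s w_\tau=\sum_k(\theta_k\Da_s w)(\,\cdot-\tau e_k)$, so $\sum_k|\theta_k\Da_s w|=|\Da_s w|$ and mass‑preservation of translations give $\abs{\Da_s w_\tau}(\overline\Omega)\le\abs{\Da_s w}(\overline\Omega)$.

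The area‑mass convergence on $\overline\Omega$ then follows from a peeling estimate using that $\inner{\cdot}$ is $1$‑Lipschitz:
\[
\inner{\Da(g+w_\tau)}(\overline\Omega)=\int_{\overline\Omega}\inner{\na g+\na w_\tau}\dx+\abs{\Da_s w_\tau}(\overline\Omega),
\]
where the integral converges to $\int_{\overline\Omega}\inner{\na g+\na w}\dx$ by $L^1$‑convergence of $\na w_\tau$, and $\limsup_\tau\abs{\Da_s w_\tau}(\overline\Omega)\le\abs{\Da_s w}(\overline\Omega)$; the matching lower bound is weak* lower semicontinuity of the area functional, since $w_\tau\to w$ in $L^1$ forces $\Da(g+w_\tau)\starto\Da u$ on $\overline\Omega$. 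Thus $\inner{\Da(g+w_\tau)}(\overline\Omega)\to\inner{\Da u}(\overline\Omega)$. For each fixed $\tau$, mollify: for $\epsilon<c\tau/2$ the positive gap guarantees $\phi:=w_\tau*\rho_\epsilon\in C_c^\infty(\Omega;\R^m)$ and keeps the mollified singular mass inside $\Omega$, so $\na(w_\tau*\rho_\epsilon)\to\na w_\tau$ in $L^1(\Omega^c;\Rmn)$ and, comparing $\Da(g+w_\tau*\rho_\epsilon)$ with $(\Da(g+w_\tau))*\rho_\epsilon$ (they differ by $(\na g-(\na g)*\rho_\epsilon)\dx\to0$ in $L^1$), $\inner{\Da(g+w_\tau*\rho_\epsilon)}(\overline\Omega)\to\inner{\Da(g+w_\tau)}(\overline\Omega)$ as $\epsilon\to0$. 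A diagonal choice $\tau_j\to0$, $\epsilon_j\to0$ then yields $\phi_j:=w_{\tau_j}*\rho_{\epsilon_j}\in C_c^\infty(\Omega;\R^m)$ with $g+\phi_j\to u$ area‑strictly.

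I expect the genuine difficulty to be the behaviour at $\partial\Omega$. Because $\na$ is nonlocal, even compactly supported interior modifications perturb $\na\phi_j$ throughout $\Omega^c$, and area‑strict convergence demands $\na\phi_j\to\na u$ in $L^1(\Omega^c)$ \emph{up to} the boundary — strictly stronger than the off‑$\Omega$ convergence of Lemma~\ref{le:strongoutside}. The inward translation is exactly what reconciles this: the gap $\dist(\supp w_\tau,\partial\Omega)\ge c\tau$ both legitimizes the mollification into $C_c^\infty(\Omega;\R^m)$ and prevents the concentrated (mollified) singular mass of $\Da_s w$ from leaking onto $\Omega^c$, while the exact partition‑of‑unity cancellation and the translation/convolution invariance keep the total fractional variation controlled. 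Checking that the \emph{differently} translated pieces do not destroy the singular mass on $\overline\Omega$ — handled above by the $1$‑Lipschitz peeling together with $\sum_k|\theta_k\Da_s w|=|\Da_s w|$ and weak* lower semicontinuity — is the crux of the estimate.
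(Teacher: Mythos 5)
Your argument is correct and follows essentially the same route as the paper's proof: a partition of unity with inward translations to push the support strictly inside $\Omega$ (controlling the area functional from above via the fractional Leibniz rule, translation invariance of $\na$, and the $1$-Lipschitz continuity of $A\mapsto\inner{A}$), followed by mollification and a diagonal argument, with the matching lower bound supplied by weak* lower semicontinuity of the area functional. The only point to tighten is the justification of that lower semicontinuity on the compact set $\overline{\Omega}$: as in Step 3 of the paper, one should apply it on a larger open set $\Omega'\Supset\Omega$ and subtract the contribution of $\Omega'\setminus\Omega$, which is under control thanks to the $L^1(\Omega^c;\Rmn)$ convergence of the absolutely continuous parts.
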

\begin{proof}
\textit{Step 1: Shrinking the support.} We show that for every $\epsilon >0$, we can find a $v \in \bvagom$ such that $\supp(v-g) \Subset \Omega$,
\begin{equation}\label{eq:steponegoal}
\norm{u-v}_{L^1(\R^n;\R^m)}+\norm{\na u - \na v}_{L^1(\Omega^c;\Rmn)}\leq \epsilon \quad\text{and}\quad \inner{\Da v}(\overline{\Omega}) \leq \inner{\Da u}(\overline{\Omega})+\epsilon.
\end{equation}
To this aim, we take a representative of $u$ that is identical to $g$ in $\Omega^c$ and set $u_0:=u-g$. Then, since $\Omega$ is a Lipschitz domain, we find a partition of unity $\chi_0,\chi_1,\cdots, \chi_N \subset \Ccr$ and vectors $\zeta_1,\cdots,\zeta_N \subset \R^n$ such that
\begin{equation}\label{eq:partition}
\sum_{i=0}^N \chi_i =1 \ \text{on $\Omega$}, \quad\chi_0 \in C_c^{\infty}(\Omega), \quad \text{and} \quad \supp(\tau_{\lambda \zeta_i}(\chi u_0)) \Subset \Omega,
\end{equation}
for all $\lambda>0$ small enough, where $\tau_{\zeta}(w)(x):=w(x+\zeta)$ denotes translation by $\zeta \in \R^n$. In view of Lemma~\ref{le:leibniz} we can define the function
\[
v=g+ \chi _0u_0 + \sum_{i=1}^{N} \tau_{\lambda \zeta}(\chi_i u_0) \in \bvagom,
\]
which satisfies $\supp(v-g) \Subset \Omega$ due to \eqref{eq:partition}. Using the first identity from \eqref{eq:partition}, we have that
\[
\norm{u-v}_{L^1(\R^n;\R^m)}\leq \sum_{i=1}^{N}\norm{\chi_i u_0 - \tau_{\lambda\zeta_i}(\chi_i u_0)}_{L^1(\R^n;\R^m)} \leq \epsilon/2,
\]
for $\lambda$ small enough given the continuity of translation on $L^1(\R^n;\R^m)$. Moreover, we have by the translation invariance of $\na$ that
\[
\na v = \na u +\sum_{i=1}^N \tau_{\lambda \zeta_i}(\na (\chi_i u_0)) -\na(\chi_i u_0)
\]
so that the continuity of translation on $L^1(\R^n;\Rmn)$ again yields
\[
\norm{\na v-\na u}_{L^1(\R^n;\Rmn)}= \sum_{i=1}^N \norm{\tau_{\lambda \zeta_i}(\na (\chi_i u_0)) -\na(\chi_i u_0)}_{L^1(\R^n;\Rmn)} \leq \epsilon/2
\]
for $\lambda$ small enough. This shows the first part of \eqref{eq:steponegoal} and at the same time that
\begin{align}\label{eq:absolute}
\int_{\Omega}\inner{\na v}\,dx &\leq \int_{\Omega}\inner{\na u}\,dx + \norm{\na v-\na u}_{L^1(\Omega;\Rmn)}
\leq \int_{\Omega}\inner{\na u}\,dx + \epsilon/2,
\end{align}
where we have exploited the 1-Lipschitz continuity of $A \mapsto \inner{A}$. Finally, for the singular part we note that
\[
\Da_s v= \chi_0\Da_s u_0+\sum_{i=1}^{N} \tau_{\lambda \zeta_i}(\chi_i \Da_s u_0) =\chi_0\Da_s u+\sum_{i=1}^{N} \tau_{\lambda \zeta_i}(\chi_i \Da_s u)
\]
in virtue of Lemma~\ref{le:leibniz}. Hence, it follows with \eqref{eq:partition} that
\begin{align*}
\abs{\Da_s v}(\overline{\Omega}) &\leq \int_{\overline{\Omega}} \chi_0\,d\abs{\Da_s u}+\sum_{i=1}^N \int_{\tau_{-\lambda \zeta_i}(\overline{\Omega})} \chi_i \,d\abs{\Da_s u}\\
&\leq \sum_{i=0}^{N} \int_{\R^n} \chi_i \,d\abs{\Da_s u}=\abs{\Da_s u}(\overline{\Omega}),
\end{align*} 
which proves the second part of \eqref{eq:steponegoal} in combination with \eqref{eq:absolute}. \smallskip

\textit{Step 2: Mollification.} Let $v$ be as in Step 1, then we show that there is a $w \in g+C_c^{\infty}(\Omega;\R^m)$ such that
\begin{equation}\label{eq:steptwogoal}
\norm{v-w}_{L^1(\R^n;\R^m)}+\norm{\na v - \na w}_{L^1(\Omega^c;\Rmn)}\leq \epsilon \quad\text{and}\quad \inner{\Da w}(\overline{\Omega}) \leq \inner{\Da v}(\overline{\Omega})+\epsilon.
\end{equation}
Let $\eta_{\delta} \in C_c^{\infty}(B_\delta(0))$ for $\delta>0$ be a standard mollifier and choose $\delta$ small enough such that
\[
\eta_{\delta} * (v-g) \in C_c^{\infty}(\Omega;\R^m),
\]
which is possible since $\supp(v-g)\Subset \Omega$. Setting $w=g+\eta_{\delta} * (v-g)$, standard properties of mollification show that
\[
\norm{v-w}_{L^1(\R^n;\R^m)} \leq \epsilon/2,
\]
for $\delta$ small enough. Furthermore, by \cite[Lemma~3.5]{comi1} we have
\[
\na w = \na g+\eta_{\delta}*\na (v-g)+\eta_{\delta}*\Da_s v.
\]
In particular, since $\eta_{\delta}*\Da_s v$ has support inside $\Omega$ we have
\[
\norm{\na v - \na w}_{L^1(\Omega^c;\Rmn)} = \norm{\na (v-g) - \eta_{\delta}*\na (v-g)}_{L^1(\Omega^c;\Rmn)} \leq \epsilon/2,
\]
for small $\delta$, thus proving the first part of \eqref{eq:steptwogoal}. Furthermore,
\begin{align*}
\inner{\Da w}(\overline{\Omega})&=\int_{\Omega}\inner{\na g+\eta_{\delta}*\na (v-g)+\eta_{\delta}*\Da_s v}\dx\\
&\leq\int_{\Omega}\inner{\na g+\eta_{\delta}*\na(v-g)}\dx+\int_{\Omega}\abs{\eta_{\delta}*\Da_s v}\dx\\
&\leq \int_{\Omega}\inner{\na g+\eta_{\delta}*\na(v-g)}\dx+\abs{\Da_s v}(\overline{\Omega})\\
&\leq \int_{\Omega}\inner{\na v}\dx+\epsilon+  \abs{\Da_s v}(\overline{\Omega})=\inner{\Da v}(\overline{\Omega})+\epsilon,
\end{align*}
where in the last line we utilize Lebesgue's dominated convergence for small enough $\delta$, recalling the fact that $\eta_{\delta}*\na (v-g) \to \na (v-g)$ in $L^1(\R^n;\Rmn)$. This yields \eqref{eq:steptwogoal}.\smallskip

\textit{Step 3: Conclusion.} By combining Step 1 and 2 we may find a sequence $(u_j)_j \subset g+C_c^{\infty}(\Omega;\R^m)$ such that
\[
u_j \to u \ \text{in $L^1(\R^n;\R^m)$}, \quad \na u_j \to \na u \ \text{in $L^1(\Omega^c;\Rmn)$ as $j \to \infty$}
\]
and
\[
\limsup_{j \to \infty}\, \inner{\Da u_j}(\overline{\Omega}) \leq \inner{\Da u}(\overline{\Omega}).
\]
In view of this bound we have that $u_j \starto u$ in $\bvagom$. Therefore, we may use the weak* lower semicontinuity of the area-functional on $\Mcal(\Omega';\Rmn)$ for some $\Omega' \Supset \Omega$ open and bounded, which follows from the convexity of $A \mapsto \inner{A}$, and Lebesgue's dominated convergence theorem to conclude that
\begin{align*}
\liminf_{j \to \infty} \inner{\Da u_j}(\overline{\Omega})&=\liminf_{j \to \infty} \inner{\Da u_j}(\Omega')-\lim_{j \to \infty} \int_{\Omega'\setminus \Omega} \inner{\na u_j}\,dx \\
& \geq \inner{\Da u}(\Omega')-\int_{\Omega'\setminus \Omega} \inner{\na u}\,dx=\inner{\Da u}(\overline{\Omega}),
\end{align*}
which finishes the proof.
\end{proof}

\section{Lower Semicontinuity}
In this section we characterize the weak* lower semicontinuity of functionals as in \eqref{eq:extendedfunctional}, which is interesting in its own right and is used in the proof of the main relaxation result in Section~\ref{sec:relaxation}. 
Recall that a continuous function $h:\Rmn \to \R$ is called quasiconvex if
\[
h(A) \leq \int_{(0,1)^n}h(A+ \nabla \phi(y))\,dy \quad \text{for all $A \in \Rmn$ and $\phi \in W^{1,\infty}_0((0,1)^n;\R^m)$},
\]
see \cite{Mor, Dacorogna}. We prove the following statement, whose proof relies on the connection between the classical and fractional variation and the theory of generalized Young measures. We note that even though $f^{\infty}(x,A)$ is only assumed to exist for $x \in \overline{\Omega}$, we do allow the sequences $x'\to x$ from \eqref{eq:recessionfrac} to approach from outside $\overline{\Omega}$. 
\begin{theorem}\label{th:sufficiency}
Let $\alpha \in (0,1)$, $\Omega \subset \R^n$ be open and bounded with $\abs{\partial \Omega}=0$, $g \in S^{\alpha,1}(\R^n;\R^m)$ and $f:\R^n \times \Rmn \to \R$ a Carath\'{e}odory integrand that satisfies \eqref{eq:growth}.
If 
\[
f^{\infty}(x,A) \ \text{exists for all $(x,A) \in \overline{\Omega}\times \Rmn$},
\]
then the functional
\[
\overline{\F}_{\alpha}(u)=\int_{\R^n} f(x,\na u)\dx+\int_{\overline{\Omega}}f^{\infty}\left(x,\frac{d\Da_s u}{\abs{d\Da_s u}}\right)\,d\abs{\Da_su} \quad \text{for $u \in \bvagom$},
\]
is weak* lower semicontinuous if and only if $f(x,\cdot)$ is quasiconvex for a.e.~$x \in \Omega$.
\end{theorem}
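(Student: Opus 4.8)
The plan is to prove both implications by transferring the problem to the classical setting through the correspondence of Proposition~\ref{prop:connection}, which identifies the fractional variation $\Da u$ with the classical variation $Dv$ of $v = I_{1-\alpha}u$, and then exploiting the theory of generalized Young measures. The crucial structural observation is that, writing $v_j = I_{1-\alpha}u_j$, one has $Dv_j = \Da u_j$ as measures (same absolutely continuous and singular parts, same spatial support), so that $\overline{\F}_\alpha(u_j)$ coincides \emph{verbatim} with a classical linear growth functional evaluated at $v_j$. The weak* convergence $u_j \starto u$ in $\bvagom$ translates into $Dv_j \starto Dv$, and the whole question becomes the lower semicontinuity of a classical functional, with the two caveats that the base domain is all of $\R^n$ and that $f^\infty$ is only available on $\overline{\Omega}$.

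\textbf{Sufficiency ($f(x,\cdot)$ quasiconvex $\Rightarrow$ lower semicontinuity).} Let $u_j \starto u$ in $\bvagom$, so that $\sup_j \abs{\Da u_j}(\R^n) < \infty$. I would first use Lemma~\ref{le:strongoutside}: fixing an open bounded $\Omega' \Supset \Omega$, the gradients $\na u_j \to \na u$ strongly in $L^1((\Omega')^c;\Rmn)$, whence the growth bound \eqref{eq:growth} and a generalized dominated convergence argument give $\int_{(\Omega')^c} f(x,\na u_j)\,dx \to \int_{(\Omega')^c} f(x,\na u)\,dx$. Recalling that $\Da_s u_j$ is supported in $\overline{\Omega}$, it remains to bound from below the contributions on $\Omega'$. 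I would pass to a subsequence realizing the $\liminf$ and, by Theorem~\ref{th:fundyoung}, generating a Young measure $\nu \in Y(\Omega';\Rmn)$ from $(\Da u_j)_j = (Dv_j)_j$. Since these measures are exact (classical) gradients, $\nu$ is a $BV$-gradient Young measure, so the Jensen-type inequalities for quasiconvex integrands of \cite{KrR10a} apply: $\inner{f(x,\cdot),\nu_x} \ge f(x,\na u(x))$ for a.e.\ $x$, together with $\inner{f^\infty(x,\cdot),\nu^\infty_x} \ge f^\infty\bigl(x, \tfrac{d\Da_s u}{d\abs{\Da_s u}}\bigr)$ and the domination $\abs{\Da_s u} \le \lambda_\nu$ on $\overline{\Omega}$. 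Combining these with the representation from Theorem~\ref{th:fundyoung} yields $\liminf_j \overline{\F}_\alpha(u_j) \ge \overline{\F}_\alpha(u)$.

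\textbf{Main obstacle.} The delicate point, and the step I expect to require the most care, is that $f^\infty$ is only known to exist on $\overline{\Omega}$, while a priori the concentration measure $\lambda_\nu$ could carry mass on $\partial\Omega$ that is produced by gradients approaching from \emph{outside} $\Omega$. To control this I would again invoke Lemma~\ref{le:strongoutside}, now with sets $\Omega''$ shrinking down to $\Omega$: since $\na u_j \to \na u$ strongly on every $(\Omega'')^c$, no concentration can occur in $\Omega' \setminus \overline{\Omega}$, so $\lambda_\nu$ is supported in $\overline{\Omega}$ and the singular term only ever evaluates $f^\infty$ on $\overline{\Omega}$. This lets me apply Theorem~\ref{th:fundyoung} after extending $f^\infty$ arbitrarily off $\overline{\Omega}$ (the extension is irrelevant because the relevant measures live on $\overline{\Omega}$), while the hypothesis $\abs{\partial\Omega}=0$ ensures that the absolutely continuous integral over $\Omega$ and over $\overline{\Omega}$ coincide. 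Keeping track of the possible boundary mass and matching it with the $\overline{\Omega}$-integral of $f^\infty$ is the technical heart of this direction.

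\textbf{Necessity (lower semicontinuity $\Rightarrow$ $f(x,\cdot)$ quasiconvex).} Here I would restrict to test sequences $u_j \in \Sonegom$ converging weak* to a limit $u$ that is itself in $\Sonegom$; then the singular term vanishes on both sides and the assumed lower semicontinuity reduces to that of $u \mapsto \int_{\R^n} f(x,\na u)\,dx$ along such sequences. To extract quasiconvexity of $f(x_0,\cdot)$ at a.e.\ Lebesgue point $x_0 \in \Omega$, I would import the classical oscillation/blow-up competitor for a given $A \in \Rmn$ and $\phi \in W^{1,\infty}_0((0,1)^n;\R^m)$ through the switching identity $\nabla\phi = \na\dac\phi$ from \eqref{eq:relation}, which by Proposition~\ref{prop:connection}\,(ii) transfers classical gradient patterns into fractional ones on the \emph{same} base points; the complementary-value constraint and the nonlocal tails are absorbed using a cut-off together with the Leibniz rule (Lemma~\ref{le:leibniz}) and the strong convergence outside $\Omega$ of Lemma~\ref{le:strongoutside}, so that only the local cube around $x_0$ contributes in the limit. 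Lower semicontinuity then produces Morrey's inequality $f(x_0,A) \le \int_{(0,1)^n} f(x_0, A + \nabla\phi)\,dy$, i.e.\ quasiconvexity, exactly as in the $p$-growth analysis of \cite{KrS22}.
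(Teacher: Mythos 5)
Your overall route coincides with the paper's: transfer $\Da u_j$ to classical variations $Dv_j$ via the Riesz potential (Proposition~\ref{prop:connection}), generate a generalized $BV$-Young measure, apply the Jensen-type inequalities of \cite{KrR10a} for the quasiconvex integrand and its recession function, handle the far field with Lemma~\ref{le:strongoutside} and dominated convergence, and reduce necessity to the $p=1$ adaptation of the characterization in \cite{KrS22} by testing with sequences in $\Sonegom$. The necessity sketch is fine and matches the paper.

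There is, however, a genuine gap at exactly the step you single out as the technical heart. You propose to apply Theorem~\ref{th:fundyoung} ``after extending $f^{\infty}$ arbitrarily off $\overline{\Omega}$,'' claiming the extension is irrelevant because the relevant measures live on $\overline{\Omega}$. Two things go wrong. First, you cannot extend $f^{\infty}$ independently of $f$: Theorem~\ref{th:fundyoung} requires a Carath\'{e}odory integrand whose \emph{own} recession function exists on all of $\overline{\Omega'}$, so one must modify $f$ itself on $(\overline{\Omega'}\setminus\overline{\Omega})\times\Rmn$. Second, once $f$ is replaced by some $f'$ there, the absolutely continuous term $\int_{\Omega'\setminus\Omega} f'(x,\na u_j)\,dx$ differs from $\int_{\Omega'\setminus\Omega} f(x,\na u_j)\,dx$, and this discrepancy is \emph{not} negligible for free: Lemma~\ref{le:strongoutside} only gives strong $L^1$-convergence of $\na u_j$ on sets with positive distance from $\Omega$, so mass of $|\na u_j|$ may accumulate in $\Omega'\setminus\overline{\Omega}$ towards $\partial\Omega$, and the support statement for $\lambda_{\nu}$ does not control the difference of the two absolutely continuous integrals. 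The paper fills this by the specific choice $f'=f^{\infty}$ (continuously extended) off $\overline{\Omega}$, so that $(f-f')^{\infty}\equiv 0$ on $\overline{\Omega}\times\Rmn$; this yields, for any $\epsilon>0$, a radius $R$ and a smaller $\Omega'$ with $|(f-f')(x,A)|\le \epsilon|A|$ for $|A|\ge R$, hence
\[
\sup_{j}\int_{\Omega'\setminus\Omega}\abs{(f-f')(x,\na u_j)}\,dx \le (MR+\norm{a}_{L^{\infty}})\abs{\Omega'\setminus\Omega}+\epsilon\,\sup_j\norm{\na u_j}_{L^1},
\]
and one concludes by letting $\Omega'\downarrow\Omega$ (using $\abs{\partial\Omega}=0$) and then $\epsilon\downarrow 0$. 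Without this quantitative control, your lower bound is not established.
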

\begin{proof}
\textit{Step 1: Necessity.} The weak* lower semicontinuity of $\overline{\Fcal}_{\alpha}$ implies, in particular, that
\[
\F_{\alpha}(u) =\int_{\R^n} f(x,\na u(x))\dx \quad \text{for $u \in S^{\alpha,1}_g(\Omega;\R^m)$},
\]
is weakly lower semicontinuous on $S^{\alpha,1}_g(\Omega;\R^m)$. A simple adaptation of \cite[Theorem~4.5]{KrS22} to the case $p=1$ yields that $f(x,\cdot)$ is quasiconvex for a.e.~$x \in \Omega$. \smallskip

\textit{Step 2: Sufficiency.} Let $u_j \starto u$ in $\bvagom$ and fix $\Omega'\Supset \Omega$ open and bounded. By Proposition~\ref{prop:connection}\,$(i)$ we can find a sequence $(v_j)_j \subset BV(\Omega';\R^m)$ and $v \in BV(\Omega';\R^m)$ such that
\begin{equation}\label{eq:gradients}
Dv_j=\Da u_j \ \text{on $\Omega'$ for $j \in \N$} \quad\text{and}\quad Dv=\Da u \ \text{on $\Omega'$};
\end{equation}
we can also ensure that $v_j \starto v$ in $BV(\Omega';\R^m)$ by continuity properties of the Riesz potential, see~\cite[Theorem~2.1\,(i)]{Mizuta}. Up to a non-relabeled subsequence, $(D v_j)_j \subset \Mcal(\overline{\Omega'};\Rmn)$ generates a $BV$-Young measure $\nu \in Y(\Omega';\Rmn)$ on $\Omega'$. Before we proceed, we can redefine $f$, similarly to \cite[Proof of Theorem~12.25]{Rindler}, such that its recession function is defined in a larger region. Indeed, by definition of the strong recession function $f^{\infty}:\overline{\Omega} \times \Rmn \to \R$, it is automatically jointly continuous, so that we can continuously extend it to $\overline{\Omega'} \times \Rmn$. If we set $f':\overline{\Omega'} \times \Rmn \to \R$ equal to $f$ on $\overline{\Omega} \times \Rmn$ and $f^{\infty}$ on $(\overline{\Omega'} \setminus \overline{\Omega})  \times \Rmn$, then $f'$ is a Carath\'{e}odory integrand with a well-defined strong recession function on $\overline{\Omega'} \times \Rmn$. Now, applying Theorem~\ref{th:fundyoung} to $f'$ and $\Omega'$ yields
\begin{align}\label{eq:yinside}
\begin{split}
\liminf_{j \to \infty}& \int_{\Omega'} f(x,\na u_j)\dx+\int_{\overline{\Omega}}f^{\infty}\left(x,\frac{d\Da_s u_j}{\abs{d\Da_s u_j}}\right)\,d\abs{\Da_su_j} \\
&\geq \liminf_{j \to \infty} \int_{\Omega'} f'(x,\na u_j)\dx+\int_{\overline{\Omega'}}(f')^{\infty}\left(x,\frac{d\Da_s u_j}{\abs{d\Da_s u_j}}\right)\,d\abs{\Da_su_j}\\
&\qquad \quad -\sup_{j \in \N}\int_{\Omega' \setminus \Omega} \abs{(f-f')(x,\na u_j)}\,dx\\
&\geq \int_{\Omega'}\inner{f'(x,\cdot),\nu_x}\dx + \int_{\overline{\Omega'}}\inner{(f')^{\infty}(x,\cdot),\nu^{\infty}_x}\,d\lambda^{\nu}-\sup_{j \in \N}\int_{\Omega' \setminus \Omega} \abs{(f-f')(x,\na u_j)}\,dx.
\end{split}
\end{align}
Due to the strong convergence $\nabla v_j =\na u_j \to \na u = \nabla v$ in sets away from $\Omega$ (Lemma~\ref{le:strongoutside}), we also find that the support of the concentration measure $\lambda_{\nu}$ is contained inside $\overline{\Omega}$ and $\nu_x=\delta_{\na u(x)}$ for a.e.~$x \in \Omega'\setminus \Omega$; that is, we find
\begin{equation}\label{eq:youtside}
\int_{\Omega' \setminus \Omega} \inner{f'(x,\cdot),\nu_x}\dx = \int_{\Omega' \setminus \Omega} f'(x,\na u)\,dx.
\end{equation}
Furthermore, since $\nu$ is a $BV$-Young measure generated by $(D v_j)_j$, we may argue as in \cite[Theorem~10]{KrR10a} and \cite[Theorem~12.25]{Rindler} using the generalized Jensen's inequalities from \cite[Theorem~9]{KrR10a} in combination with the quasiconvexity, continuity and linear growth of $f(x,\cdot)$ for a.e.~$x \in \Omega$ and of $f^{\infty}(x,\cdot)$ for all $x \in \overline{\Omega}$ (by continuity of $f^{\infty}$) to conclude
\begin{equation}\label{eq:yyinside}
\begin{split}
\int_{\Omega}\inner{f(x,\cdot),\nu_x}\dx + \int_{\overline{\Omega}}\inner{f^{\infty}(x,\cdot),\nu^{\infty}_x}\,d\lambda_{\nu} &\geq \int_{\Omega} f(x,\nabla v)\dx+\int_{\overline{\Omega}}f^{\infty}\left(x,\frac{dD_s v}{\abs{dD_s v}}\right)\,d\abs{D_s v}\\
&=\int_{\Omega} f(x,\na u)\dx+\int_{\overline{\Omega}}f^{\infty}\left(x,\frac{d\Da_s u}{\abs{d\Da_s u}}\right)\,d\abs{\Da_s u},
\end{split}
\end{equation}
with the last equality exploiting \eqref{eq:gradients}. Additionally, since $\na u_j \to \na u$ strongly in $L^1((\Omega')^c;\Rmn)$ by Lemma~\ref{le:strongoutside}, the growth bound \eqref{eq:growth} and Lebesgue's dominated convergence theorem yields
\begin{equation*}
\lim_{j \to \infty} \int_{(\Omega')^c} f(x,\na u_j)\dx = \int_{(\Omega')^c} f(x,\na u)\dx.
\end{equation*}
Combining this with \eqref{eq:yinside}, \eqref{eq:youtside} and \eqref{eq:yyinside} results in
\begin{align}\label{eq:almost}
\begin{split}
\liminf_{j \to \infty} \overline{\Fcal}_{\alpha}(u_j) &\geq  \int_{\Omega'} f'(x,\na u)\dx+\int_{(\Omega')^c}f(x,\na u)\,dx+\int_{\overline{\Omega}}f^{\infty}\left(x,\frac{d\Da_s u}{\abs{d\Da_s u}}\right)\,d\abs{\Da_su}\\
& \qquad \quad -\sup_{j \in \N}\int_{\Omega' \setminus \Omega} \abs{(f-f')(x,\na u_j)}\,dx.
\end{split}
\end{align}
Finally, since 
\[
(f-f')^{\infty}(x,A)=0 \quad \text{for all $x \in \overline{\Omega}$ and $A \in \Rmn$,}
\]
we may choose $\Omega'$ potentially smaller and find a $R>0$ such that
\[
\abs{(f-f')(x,A)} \leq \epsilon \abs{A} \quad \text{for all $x \in \Omega'$ and $A \in \Rmn$ with $\abs{A} \geq R$},
\]
for any given $\epsilon >0$. With $C:=\sup_{j} \norm{\na u_j}_{L^1(\R^n;\Rmn)} < \infty$, we obtain
\[
\sup_{j \in \N}\int_{\Omega' \setminus \Omega} \abs{(f-f')(x,\na u_j)}\,dx \leq (MR+\norm{a}_{L^{\infty}(\R^n)})\abs{\Omega'\setminus \Omega}+\epsilon C,
\]
so that we can deduce the result by first letting $\Omega' \downarrow \Omega$, given that $f=f'$ on $\Omega \times \Rmn$, and secondly letting $\epsilon \downarrow 0$ in \eqref{eq:almost}.
\end{proof}

In order to get the existence of minimizers, we also impose the coercivity bound \eqref{eq:coercivity} and utilize the improved Poincar\'{e} inequality from Proposition~\ref{prop:poincare}.
\begin{corollary}\label{cor:existence}
Let $\alpha \in (0,1)$, $\Omega \subset \R^n$ be open and bounded with $\abs{\partial \Omega}=0$, $g \in S^{\alpha,1}(\R^n;\R^m)$ and $f:\R^n \times \Rmn \to \R$ a Carath\'{e}odory integrand that satisfies \eqref{eq:growth} and \eqref{eq:coercivity}. If 
\[
f^{\infty}(x,A) \ \text{exists for all $(x,A) \in \overline{\Omega}\times \Rmn$},
\]
and $f(x,\cdot)$ is quasiconvex for a.e.~$x \in \Omega$, then
\[
\overline{\F}_{\alpha}(u)=\int_{\R^n} f(x,\na u)\dx+\int_{\overline{\Omega}}f^{\infty}\left(x,\frac{d\Da_s u}{\abs{d\Da_s u}}\right)\,d\abs{\Da_su} \quad \text{for $u \in \bvagom$},
\]
admits a minimizer on $\bvagom$.
\end{corollary}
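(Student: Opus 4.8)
The plan is to run the direct method of the calculus of variations. The three ingredients are: finiteness of the infimum together with an a priori bound on minimizing sequences, coming from the coercivity \eqref{eq:coercivity} and the improved Poincar\'e inequality (Proposition~\ref{prop:poincare}); compactness, i.e.\ extraction of a weak* convergent subsequence; and the weak* lower semicontinuity of $\overline{\F}_\alpha$ from Theorem~\ref{th:sufficiency}, which is available precisely because $f(x,\cdot)$ is quasiconvex for a.e.\ $x\in\Omega$. To begin, I would note that $m:=\inf_{\bvagom}\overline{\F}_\alpha$ is finite from above, since $g\in\Sone(\R^n;\R^m)\subset\bvagom$ and the growth bound \eqref{eq:growth} gives $\overline{\F}_\alpha(g)\le M\norm{\na g}_{L^1(\R^n;\Rmn)}+\norm{a}_{L^1(\R^n)}<\infty$.

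The crux is the a priori bound, and this is where I expect the main obstacle. Writing $w=u-g\in\bva_0(\Omega)$ and taking the recession limit of \eqref{eq:coercivity} yields $f^\infty(x,A)\ge\mu\abs{A}$ on $\overline\Omega\times\Rmn$. Fixing the set $\Omega_r\Supset\Omega$ from the proof of Proposition~\ref{prop:poincare}, I would split $\int_{\R^n}=\int_{\Omega_r}+\int_{\Omega_r^c}$, bound the integral over $\Omega_r$ and the singular term from below using \eqref{eq:coercivity} and $f^\infty\ge\mu\abs{\cdot}$, and bound the integral over $\Omega_r^c$ from below by $-M\int_{\Omega_r^c}\abs{\na u}\,dx-\norm{a}_{L^1(\R^n)}$ via \eqref{eq:growth}. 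Recalling that $\Da_s u$ is supported in $\overline\Omega\subset\Omega_r$, this produces
\[
\overline{\F}_\alpha(u)\ge \mu\abs{\Da u}(\Omega_r)-M\int_{\Omega_r^c}\abs{\na u}\,dx-c\abs{\Omega_r}-\norm{a}_{L^1(\R^n)}.
\]
The difficulty is the nonlocal term $\int_{\Omega_r^c}\abs{\na u}\,dx$, which the coercivity does not control. Here I would invoke the decay estimate \eqref{eq:decaybound} and the inequality \eqref{eq:strongpoincare} from the proof of Proposition~\ref{prop:poincare}, applied to $w$, giving $\int_{\Omega_r^c}\abs{\na w}\,dx\le C r^\alpha\abs{\Da w}(\Omega_r)$. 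Combining with $\na u=\na w+\na g$ and $\abs{\Da w}(\Omega_r)\le\abs{\Da u}(\Omega_r)+\abs{\Da g}(\Omega_r)$, the bad term reads $M\int_{\Omega_r^c}\abs{\na u}\,dx\le MCr^\alpha\abs{\Da u}(\Omega_r)+\text{const}$, and choosing $r$ small enough that $MCr^\alpha\le\mu/2$ lets me absorb it on the left to obtain $\tfrac{\mu}{2}\abs{\Da u}(\Omega_r)\le\overline{\F}_\alpha(u)+\text{const}$. In particular $m>-\infty$, and Proposition~\ref{prop:poincare} upgrades the resulting bound on $\abs{\Da u}(\Omega_r)$ to a full bound $\norm{u-g}_{\bva(\R^n)}\le C(\overline{\F}_\alpha(u)+1)$, so any minimizing sequence $(u_j)_j$ is bounded in $\bva(\R^n;\R^m)$.

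To conclude, by the Banach--Alaoglu theorem and the compactness result \cite[Theorem~3.16]{comi1} a non-relabeled subsequence of $(u_j)_j$ converges weak* to some $u\in\bva(\R^n;\R^m)$; the accompanying $L^1$-convergence preserves the complementary value $u=g$ a.e.\ in $\Omega^c$, so that $u\in\bvagom$. Since $f(x,\cdot)$ is quasiconvex for a.e.\ $x\in\Omega$ and $f^\infty$ exists on $\overline\Omega\times\Rmn$, Theorem~\ref{th:sufficiency} gives the weak* lower semicontinuity of $\overline{\F}_\alpha$, whence
\[
\overline{\F}_\alpha(u)\le\liminf_{j\to\infty}\overline{\F}_\alpha(u_j)=m.
\]
Therefore $u$ attains the infimum and is the desired minimizer on $\bvagom$.
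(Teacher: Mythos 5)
Your proposal is correct and follows essentially the same route as the paper: coercivity on a sufficiently large neighbourhood $\Omega_r\Supset\Omega$, the growth bound together with the decay estimate \eqref{eq:decaybound} and \eqref{eq:strongpoincare} to absorb the uncontrolled tail $M\int_{\Omega_r^c}\abs{\na u}\,dx$, Proposition~\ref{prop:poincare} to upgrade to a full $\bva$-bound, and then the direct method with the weak* compactness and Theorem~\ref{th:sufficiency}. The only difference is that you spell out the absorption argument in more detail than the paper's two-line estimate, but the mechanism is identical.
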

\begin{proof}
We fix $\Omega' \Supset \Omega$ large enough as in Proposition~\ref{prop:poincare} and such that $M\abs{\Da v}((\Omega')^c) \leq \frac{\mu }{2}\abs{\Da v}(\Omega')$ for all $v \in BV^{\alpha}_0(\Omega;\R^m)$, which is possible by \eqref{eq:decaybound} and \eqref{eq:strongpoincare}. Now using the coercivity condition of $f$ on $\Omega'$, the growth bound on $(\Omega')^c$ and Proposition~\ref{prop:poincare}, we find for all $u \in \bvagom$ that
\begin{align*}
\overline{\Fcal}_{\alpha}(u) &\geq \mu \abs{D^{\alpha}u}(\Omega')-M\abs{\Da u}((\Omega')^c)-C'\\
& \geq \mu \abs{\Da (u-g)}(\Omega')-M\abs{\Da (u-g)}((\Omega')^c)-C''\\
&\geq \frac{\mu}{2C}\norm{u-g}_{BV^{\alpha}(\R^n;\R^m)}-C''.
\end{align*}
Hence, a standard argument using the direct method and the weak* lower semicontinuity from Theorem~\ref{th:sufficiency} finishes the proof.
\end{proof}
\begin{example}\label{ex:area}
An example integrand that satisfies all the hypotheses of Corollary~\ref{cor:existence} is
\[
f:\R^n \times \R^{m \times n} \to [0,\infty), \quad f(x,A) = \sqrt{1+\abs{A}^2}-1,
\]
since $f$ is convex, $f^{\infty}(x,A)=\abs{A}$ and
\[
\abs{A}-1 \leq f(x,A) \leq \abs{A} \quad \text{ for all $x \in \R^n$ and $A \in \Rmn$}.
\]
Hence, the following type of fractional area-functional
\[
\overline{\Fcal}_{\alpha}(u) = \int_{\R^n} \sqrt{1+\abs{\na u}^2}-1\,dx + \abs{\Da_su}(\overline{\Omega}),
\]
is weak* lower semicontinuous on $\bvagom$ and admits a minimizer.
\end{example}

\section{Relaxation}\label{sec:relaxation}
We are now in the position to give the proof of the main result. For a Carath\'{e}odory integrand $f:\R^n \times \Rmn \to \R$ that satisfies the bounds \eqref{eq:growth} and \eqref{eq:coercivity}, it follows from \cite[Proposition~9.5]{Dacorogna} that
\[
f^{\rm qc}(x,A) = \inf\left\{\int_{(0,1)^n}f(x,A+\nabla \phi(y))\,dy \,:\, \phi \in W^{1,\infty}_0((0,1)^n;\R^m)\right\}
\]
for $(x,A) \in \R^n \times \Rmn$, is a Carath\'{e}odory integrand and from \cite[Theorem~6.9]{Dacorogna} that the function $f^{\rm qc}(x,\cdot)$ is the largest quasiconvex function below $f(x,\cdot)$. Note also that $f^{\rm qc}$ still satisfies \eqref{eq:growth} and \eqref{eq:coercivity} for $x \in \overline{\Omega}$ since $f^{\rm qc} \leq f$ and the lower bound in \eqref{eq:coercivity} is quasiconvex in the second argument.

\begin{proof}[Proof of Theorem~\ref{th:main}]
Denote the functional on the right-hand side of \eqref{eq:relaxation} by $\overline{\F}_{\alpha}$. We split up the proof into the lower and upper bound. \smallskip

\textit{Step 1: Lower bound.} Let $(u_j)_j \subset \Sonegom$ with $u_j \starto u$ in $\bvagom$ as $j \to \infty$, then we can completely follow the proof of Theorem~\ref{th:sufficiency} without using the generalized Jensen's inequalities to conclude (up to a non-relabeled subsequence) that
\[
\liminf_{j \to \infty} \Fcal_{\alpha}(u_j) \geq \int_{\Omega}\inner{f(x,\cdot),\nu_x}\dx + \int_{\overline{\Omega}}\inner{f^{\infty}(x,\cdot),\nu^{\infty}_x}\,d\lambda_{\nu}+\int_{\Omega^c}f(x,\na u)\,dx,
\]
with $\nu$ the generalized Young measure generated by the sequence $(\na u_j)_j$ (on some domain containing $\Omega$). Using the bounds $f \geq f^{\rm qc}$ and $f^{\infty} \geq (f^{\rm qc})^{\#}$, we can now proceed as in Theorem~\ref{th:sufficiency} by using the Jensen's inequalities for the quasiconvexification $f^{\rm qc}$, to obtain the lower bound. Here, we make crucial use of the second part of \eqref{eq:technicalcond}, since the Jensen's inequalities for upper recession functions in \cite[Theorem~9]{KrR10a} can only be directly applied in the $x$-independent case.\smallskip

\textit{Step 2: Upper bound.} We first show that we can restrict to the case $u \in g+ C_c^{\infty}(\Omega;\R^m)$ for the upper bound. To this aim, we take $u \in \bvagom$ and a sequence $(u_j)_j \subset g + C_c^{\infty}(\Omega;\R^m)$ which converges area-strictly to $u$, possible by Theorem~\ref{th:density}. Hence, Lebesgue's dominated convergence theorem and the growth bound \eqref{eq:growth} yields
\begin{equation}\label{eq:compconv}
\lim_{j \to \infty} \int_{\Omega^c}f(x,\na u_j)\dx=\int_{\Omega^c}f(x,\na u)\dx. 
\end{equation}
Next, if we denote by $g:\Omega \times \Rmn \to \R$ the (jointly) upper semicontinuous envelope of $f^{\rm qc}$ restricted to $\Omega \times \Rmn$, then it is not hard to verify that $g^{\#} = (f^{\rm qc})^{\#}$ via the definition of the upper recession function in \eqref{eq:upperrecession}.  We take for $R>0$ a truncation function $T_R \in C_c^{\infty}(\Rmn)$ with $0 \leq T_R(A) \leq 1$ and $T_R \equiv 1$ on $B_R(0)$ and bound
\[
f^{\rm qc}(x,A) \leq T_R(A)f^{\rm qc}(x,A)+T_R^c(A)g(x,A),
\]
with $T_R^c(A):=1-T_R(A)$. The first integrand on the right-hand side has zero recession function, whereas the second integrand is jointly upper semicontinuous with upper recession function $g^{\#}=(f^{\rm qc})^{\#}$. Applying Theorem~\ref{th:fundyoung} and \eqref{eq:lowerfundyoung} then gives, in combination with the fact that $(\Da u_j)_j \subset \Mcal(\overline{\Omega};\Rmn)$ generates the elementary Young measure $\delta[\Da u]$ (cf.~Remark~\ref{rem:areastrict}),
\begin{align*}
\liminf_{j \to \infty} &\int_{\Omega} f^{\rm qc}(x,\na u_j)\,dx \\
& \leq \lim_{j \to \infty} \int_{\Omega} T_R(\na u_j)f^{\rm qc}(x,\na u_j)\,dx + \limsup_{j \to \infty} \int_{\Omega} T_R^c(\na u_j)g(x,\na u_j)\,dx\\
&\leq \int_{\Omega}T_R(\na u)f^{\rm qc}(x,\na u)+T_R^c(\na u)g(x,\na u)\,dx+\int_{\overline{\Omega}}(f^{\rm qc})^{\#}\left(x,\frac{d \Da_s u}{d\abs{\Da_s u}}\right)\,d\abs{\Da_s u}.
\end{align*}
Letting $R \to \infty$, using the dominated convergence theorem and adding the limit in \eqref{eq:compconv} results in
\[
\liminf_{j \to \infty} \overline{\Fcal}_{\alpha}(u_j) \leq \overline{\Fcal}_{\alpha}(u).
\]
Therefore, if we find for each $j \in \N$ a recovery sequence for $u_j$, then we can conclude the result using a diagonal argument; here, the coercivity of $f$ is important to be able to extract convergent diagonal sequences. We can restrict to the case $u \in g+ C_c^{\infty}(\Omega;\R^n)$ from now on. 

The remaining argument is an adaptation of \cite[Theorem~1.2]{KrS22} to the linear growth setting. To prove the upper bound when $u \in g+C_c^{\infty}(\Omega;\R^m)$, we take a Lipschitz domain $O \Subset \Omega$ and apply Proposition~\ref{prop:connection}\,$(i)$ to find a $v \in W^{1,1}(O;\R^m)$ such that 
\begin{equation}\label{eq:gradequal}
\nabla v = \na u \ \ \text{on $O$.}
\end{equation} 
Then, we apply a classical relaxation theorem \cite[Theorem~9.8]{Dacorogna} to find a sequence $(v_k)_k \subset W^{1,1}(O;\R^m)$ with the same trace values as $v$ on the boundary $\partial \Omega$ such that $v_k \to v$ in $L^1(O;\R^m)$ and
\begin{equation}\label{eq:recoveryclas}
\lim_{k \to \infty}\int_{O} f(x,\nabla v_k)\dx = \int_{O} f^{\rm qc}(x,\nabla v)\dx.
\end{equation}
In view of the coercivity of $f$ we may also suppose
 that $v_k \starto v$ in $BV(O;\R^m)$. Now
  define the auxiliary sequence $(\tilde{v}_k)_k \subset W^{1,1}(\R^n;\R^m)$ via 
$\tilde{v}_k:=v_k-v$ on $O$ and $\tilde{v}_k =0$ in $O^c$. 
By Proposition~\ref{prop:connection}\,$(ii)$ and \cite[Eq.~(3.3)]{KrS22}, we find that the sequence $(\tilde{u}_k)_k \subset \Sone(\R^n;\R^m)$ 
defined by $\tilde{u}_k =  \dac\tilde{v}_k$ satisfies
\begin{equation}\label{eq:auxconvergence}
\tilde{u}_k \to 0 \ \ \text{in $L^1(\R^n;\R^m)$ as $k \to \infty$}
\end{equation}
and its fractional gradients are given by
\begin{equation}\label{eq:gradientequality}
\na \tilde{u}_k = \nabla (v_k -v) \ \ \text{in $O$} \quad \text{and} \quad \na \tilde{u}_k =0 \ \ \text{in $O^c$}.
\end{equation}
Take a cut-off function $\chi \in C_c^{\infty}(\Omega)$ such that $0 \leq \chi \leq 1$ and $\chi|_{O} \equiv 1$. Then, we define the sequence $(w_k)_k \subset \Sonegom$ by
\[
w_k = u+\chi \tilde{u}_k \starto u \ \ \text{in $\bvagom$ as $k \to \infty$},
\]
where the convergence follows from \eqref{eq:auxconvergence} and the Leibniz rule (Lemma~\ref{le:leibniz}). Moreover, we have by \eqref{eq:leibnizbound} the convergence of the residuals
\begin{equation}\label{eq:residual}
R_k:= \na w_k - \na u - \chi \na \tilde{u}_k \to 0 \ \ \text{in $L^1(\R^n;\Rmn)$}.
\end{equation}
In $O$, we find in view of \eqref{eq:gradequal} and \eqref{eq:gradientequality} that $\na w_k = \nabla v_k+R_k$. Due to the strong convergence of $R_k$ to zero it follows by testing with the Lipschitz basis from \cite[Lemma~3]{KrR10a} that the sequences $(\nabla v_k)_k$ and $(\na w_k)_k$ when restricted to $O$ generate (up to a non-relabeled subsequence) the same generalized Young measure $\nu \in Y(O;\Rmn)$. As a result, we use Theorem~\ref{th:fundyoung} twice to conclude
\begin{align}\label{eq:estimate1}
\begin{split}
\liminf_{k \to \infty} \int_{O} f(x,\na w_k)\dx &\leq \int_{O}\inner{f(x,\cdot),\nu_x}\dx + \int_{\overline{O}}\inner{f^{\infty}(x,\cdot),\nu^{\infty}_x}\,d\lambda_{\nu}\\
&= \lim_{k \to \infty}\int_{O}f(x,\nabla v_k)\dx =\int_{O} f^{\rm qc}(x,\nabla v)\dx=\int_{O} f^{\rm qc}(x,\na u)\dx,
\end{split}
\end{align}
where we use \eqref{eq:recoveryclas} and \eqref{eq:gradequal} in the final two equalities. Additionally, in $O^c$ we have that $\na w_k = \na u +R_k$ thanks to \eqref{eq:gradientequality}. Hence, we find using \eqref{eq:residual} and \eqref{eq:growth} that
\begin{equation}\label{eq:estimate2}
\limsup_{k \to \infty}\int_{\Omega \setminus O}f(x,\na w_k)\dx = \limsup_{k \to \infty} \int_{\Omega \setminus O} f(x,\na u +R_k)\dx \leq \norm{M\abs{\na u}+a}_{L^{1}(\Omega \setminus O)}.
\end{equation}
Finally, using Lebesgue's dominated convergence theorem and \eqref{eq:residual} we derive
\begin{equation}\label{eq:estimate3}
\lim_{k \to \infty}\int_{\Omega^c}f(x,\na w_k)\dx = \lim_{k \to \infty} \int_{\Omega^c} f(x,\na u +R_k)\dx=\int_{\Omega^c} f(x,\na u)\dx.
\end{equation}
Summing \eqref{eq:estimate1}, \eqref{eq:estimate2} and \eqref{eq:estimate3} together, we obtain
\[
\liminf_{k \to \infty} \int_{\R^n}f(x,\na w_k)\dx \leq \int_{O}f^{\rm qc}(x,\na u)\dx+\int_{\Omega^c}f(x,\na u)\dx + \norm{M\abs{\na u}+a}_{L^{1}(\Omega \setminus O)},
\] 
which yields the result if we let $O \uparrow \Omega$ and extract a diagonal sequence.
\end{proof}
\begin{remark}\label{rem:relaxation}
a) Because of the coercivity condition of $f$, the functional $\Fcal^{\rm rel}_{\alpha}$ is in particular weak* lower semicontinuous on $\bvagom$. Interestingly, this fact does not immediately follow from the lower semicontinuity result in Theorem~\ref{th:sufficiency}, since the strong recession function of $\mathbbm{1}_{\Omega}f^{\rm qc}+\mathbbm{1}_{\Omega^c}f$ need not exist. An application of the direct method as in Corollary~\ref{cor:existence} provides the existence of minimizers of $\Fcal_{\alpha}^{\rm rel}$. \smallskip

b) A simple argument using the theory of Young measures shows that the functional
\[
\overline{\Fcal}_{\alpha}(u)=\int_{\R^n} f(x,\na u)\dx+\int_{\overline{\Omega}}f^{\infty}\left(x,\frac{d\Da_s u}{\abs{d\Da_s u}}\right)\,d\abs{\Da_su} \quad \text{for $u \in \bvagom$},
\]
is the area-strictly continuous extension of $\Fcal_{\alpha}$ to $\bvagom$. This immediately implies that this functional is also the relaxation of $\Fcal_{\alpha}$ if $f(x,\cdot)$ is quasiconvex for a.e.~$x \in \Omega$, given the lower semicontinuity result from Theorem~\ref{th:sufficiency} and the density with respect to area-strict convergence. \smallskip

c) The requirement \eqref{eq:technicalcond} on $f^{\rm qc}$ is needed for the application of the Jensen's inequalities in the lower bound and allows the relaxation result to be phrased for general Carath\'{e}odory integrands. However, one can dispose of this assumption if we assume a continuity condition similarly as in \cite[Theorem~1.7]{ADR}, that is,
\[
\abs{f(x,A)-f(y,A)} \leq \omega(\abs{x-y})(1+\abs{A}) \quad \text{for all $x,y \in \overline{\Omega}$ and $A \in \Rmn$},
\]
where $\omega:[0,\infty) \to [0,\infty)$ is a continuous and increasing function with $\omega(0)=0$. Indeed, one can utilize \eqref{eq:growth} and \eqref{eq:coercivity} as in \cite[Theorem~7.6]{Rindler} to deduce that $f^{\rm qc}$ inherits the same continuity condition (up to a different modulus of continuity), after which \eqref{eq:technicalcond} readily follows.
\end{remark}
\section*{Acknowledgements}
The author would like to thank Carolin Kreisbeck for the useful suggestions on a preliminary version of the manuscript.

\addcontentsline{toc}{section}{\protect\numberline{}References}
\bibliographystyle{abbrv}
\bibliography{LinearGrowthBib}
\end{document}